\documentclass[12pt,reqno]{amsart}

\usepackage{enumitem} 
\setlist{leftmargin=26pt}

\usepackage{amsmath,amsfonts,amssymb,amscd,amsthm,calc}
\usepackage[english]{babel}
\usepackage{yfonts,color}

\usepackage{tikz}

\newtheorem{theorem}{Theorem}[section]
\newtheorem{proposition}[theorem]{Proposition}
\newtheorem{lemma}[theorem]{Lemma}

\theoremstyle{definition}
\newtheorem{definition}[theorem]{Definition}

\newcommand{\darrow}{\!\downarrow}
\newcommand{\uarrow}{\!\uparrow}
\newcommand{\la}{\langle}
\newcommand{\ra}{\rangle}
\newcommand{\concat}{\mbox{}^\smallfrown}

\newcommand{\cmp}[1]{\overline #1}
\newcommand{\bigset}[1]{\big\{ #1 \big\}}

\renewcommand{\leq}{\leqslant}
\renewcommand{\geq}{\geqslant}

\newcommand{\fa}{\forall}
\newcommand{\ex}{\exists}

\newcommand{\A}{\mathcal{A}}
\newcommand{\B}{\mathcal{B}}
\newcommand{\E}{\mathcal{E}}
\newcommand{\G}{\mathcal{G}}
\newcommand{\K}{\mathcal{K}}

\newcommand{\dom}{\mathrm{dom}}

\newcommand{\eff}{\mathrm{eff}}

\newcommand{\emb}{\hookrightarrow}

\DeclareMathOperator{\graph}{\mathrm{graph}}
\newcommand{\pa}{\mathrm{PA}}
\newcommand{\Sc}{\mathcal{S}}

\AtBeginDocument{%
   \def\MR#1{}
}

\begin{document}

\title[Nonembeddings of Combinatory Algebras]
{Nonembeddings of Combinatory Algebras}

\author[P. Lutz]{Patrick Lutz}
\address[Patrick Lutz]{
University of Michigan\\
Department of Mathematics\\
2074 East Hall\\
530 Church Street\\
Ann Arbor, MI 48109-1043\\
U.S.A.
}
\email{pglutz@umich.edu}
\urladdr{https://websites.umich.edu/~pglutz/}

\author[P. Shafer]{Paul Shafer}
\address[Paul Shafer]{School of Mathematics\\
University of Leeds\\
Leeds\\
LS2 9JT\\
United Kingdom}
\email{p.e.shafer@leeds.ac.uk}
\urladdr{https://peshafer.github.io}

\author[S. A. Terwijn]{Sebastiaan A. Terwijn}
\address[Sebastiaan A. Terwijn]{Radboud University Nijmegen\\
Department of Mathematics\\
P.O. Box 9010, 6500 GL Nijmegen, the Netherlands.} 
\email{terwijn@math.ru.nl}
\urladdr{https://www.math.ru.nl/~terwijn/}

\begin{abstract} 
In the theory of combinatorial algebras, there is a sequence of 
embeddings between Kleene's second model, van Oosten's model, and 
Scott's graph model. We prove that none of these embeddings can 
be reversed.  
We also prove nonembedding results for the effective versions of 
these models, and in addition we discuss relativized embeddings. 
This answers several questions from the literature.  
\end{abstract}

\keywords{partial combinatory algebra, embeddings, isomorphisms}

\subjclass[2010]{
03B40,  
03D25, 
03D80.  
}

\date{\today}

\maketitle

\section{Introduction}

We study embeddings and nonembeddings of partial combinatory algebras (pcas). 
Combinatory algebra is a formalism from logic that is closely related to 
lambda calculus. Both formalisms were conceived as theories of computation. 
In both formalisms, the application operator is total, since the application 
of one term to another always yields another term, even if this is 
computationally meaningless. For a theory of computation, this is not always 
desirable. It was long known that a treatment of combinatory algebra
with a partial application operator is possible (as is clear from the discussion 
about completions of pcas in the early 1970s), but the first written account by 
Feferman \cite{Feferman} came only in 1975. For more about the relation between 
combinatory algebra and lambda calculus we refer the reader to 
Barendregt~\cite{Barendregt}.

As might be expected, there are numerous connections between combinatory algebra
and computability theory, yet another theory that emerged in the 1930s. 
Still, after the initial period where Kleene established fundamental results 
such as the equivalence of computability and $\lambda$-definability, 
the two areas developed largely independently. It took until Scott~\cite{Scott} 
before further significant connections were made, when he introduced models of 
the lambda calculus directly based on notions from computability theory. 
As Scott says in his paper (p183)
\begin{quote}
$\lambda$-calculus and recursion theory make a good combination.
\end{quote}
This statement serves as a motto of his paper, 
and it was borne out by much of the subsequent work. 
Indeed, there remain a lot of relations between the two areas to be 
explored, and the current paper may be seen as a contribution to this area. 
We use the notions and techniques from computability theory to say something 
about the relations between some of the standard models in 
combinatory algebra. 
We start with a quick review of the basic definitions. 

A {\em partial applicative system (pas)\/} is a set $A$ with a partial 
binary application operator $\cdot$. For any two elements $a,b\in A$, 
we have an application $a\cdot b$, that may or may not be defined. 
If it is defined we write $a\cdot b\darrow$. We also often omit the 
dot and simply write $ab$. We can consider all the terms built from 
elements of $A$, variables, and application. 
An applicative system is called a 
{\em partial combinatory algebra (pca)\/} if for every term 
$t(x_1,\ldots,x_n,x)$, $n\geq 0$, with free variables among
$x_1,\ldots,x_n,x$, there exists $b\in \A$ such that
for all $a_1,\ldots,a_n,a\in \A$,
\begin{enumerate}[label={\rm (\roman*)}] 

\item $ba_1\cdots a_n\darrow$,

\item $ba_1\cdots a_n a \simeq t(a_1,\ldots,a_n,a)$.

\end{enumerate}
Here $\simeq$ denotes the Kleene equality, meaning either both sides 
are defined and equal, or both are undefined.  
A combinatory algebra is simply a pca for which the application 
operator is total. 

The defining property of pcas (called combinatory completeness) 
is precisely characterized by the existence of special combinators 
$k$ and $s$, just as in classical combinatory algebra. 
The following theorem is implicit in Feferman~\cite{Feferman}.

\begin{theorem} \label{Feferman}
A pas $\A$ is a pca if and only if it has elements $k$ and $s$
with the following properties for all $a,b,c\in\A$:
\begin{itemize}

\item $ka\darrow$ and $kab = a$,

\item $sab\darrow$ and $sabc \simeq ac(bc)$.

\end{itemize}
\end{theorem}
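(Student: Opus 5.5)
We argue both directions separately. For the forward direction, assume $\A$ is a pca. To obtain $k$, apply combinatory completeness with $n=1$ to the term $t(x_1,x)=x_1$. This gives $b$ with $ba_1\darrow$ and $ba_1a\simeq a_1$ for all $a_1,a$, so $k=b$ works. To obtain $s$, apply completeness with $n=2$ to the term $t(x_1,x_2,x)=x_1x(x_2x)$. This yields $b$ with $ba_1a_2\darrow$ and $ba_1a_2a\simeq a_1a(a_2a)$. Condition (i) for $n=2$ only asserts $ba_1a_2\darrow$, and we also need that partial applications such as $ba_1$ are defined. That follows because $ba_1a_2\darrow$ presupposes $ba_1\darrow$: under the standard convention, a term is defined only if all of its subterms are defined.

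For the converse, assume $k$ and $s$ are given. We carry out the classical bracket abstraction, with care about partiality. For each term $t$ and variable $x$, we define by induction on $t$ a term $\lambda^*x.t$ not containing $x$:
\[
\lambda^*x.x = skk,\qquad \lambda^*x.u = ku \ (x \text{ not in } u),\qquad \lambda^*x.(t_1t_2)=s(\lambda^*x.t_1)(\lambda^*x.t_2).
\]
Here $skk$ replaces the usual identity term $I$. We check that $skk\darrow$, and that $skka\simeq ka(ka)=a$, using $ka\darrow$.

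The key lemma we would prove by induction on $t$ has two parts. First, $\lambda^*x.t$ is defined whenever its parameters, meaning the free variables other than $x$, are assigned elements. Second, $(\lambda^*x.t)a\simeq t[a/x]$. Definedness holds because each clause produces either $ku$ with $u$ defined, or $sPQ$ with $P,Q$ defined, and these are defined by the hypotheses on $k$ and $s$. The application equation follows from $sPQa\simeq Pa(Qa)$ together with the induction hypothesis. Then for a term $t(x_1,\dots,x_n,x)$ we set
\[
b=\lambda^*x_1.\cdots\lambda^*x_n.\lambda^*x.t,
\]
a closed term that we must show is defined. Iterating the lemma gives $ba_1\cdots a_n\simeq \lambda^*x.t[\vec a/\vec x]$, which is defined, and applying it to $a$ gives (ii).

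The main obstacle is the partiality subtlety in the clause $\lambda^*x.u=ku$ when $x$ does not occur in $u$. If $u$ is an application term that may be undefined for some parameter values, then $ku$ could be undefined even when its arguments are not yet supplied. Two things must be ensured here. The first is that the abstraction is genuinely defined after the first $n$ arguments, as (i) requires even when $t$ itself diverges. The second is that substitution commutes with $\lambda^*$. To fix this, we restrict the constant clause to the case where $u$ is a variable or a constant. A compound $u=u_1u_2$ not containing $x$ is then handled by the $s$-clause, which always yields a defined term from defined parts. With this restriction, $\lambda^*x.t$ is built only from $k$, $s$, variables and constants by applications of the forms $kc$ and $sPQ$, so it is always defined. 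The remaining verification is the routine induction showing that $(\lambda^*x.t)[\vec a/\vec x]=\lambda^*x.(t[\vec a/\vec x])$ and that its application to $a$ agrees with $t[\vec a,a]$ in the sense of $\simeq$.
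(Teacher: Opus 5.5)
The paper gives no proof of this theorem (it is cited as implicit in Feferman), and your argument is the standard one and is correct. The forward direction applies completeness to the terms $x_1$ and $x_1x(x_2x)$. The converse uses bracket abstraction with the $k$-clause restricted to variables and constants, which is exactly what makes $\lambda^*x.t$ always defined and hence gives condition (i) even when $t$ diverges. One small inaccuracy in your motivation: in the unrestricted clause, only definedness is at stake. Commutation of substitution with $\lambda^*$ holds there as well, since substituting constants for variables other than $x$ never changes whether $x$ occurs.
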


Note that a pca with one element, and $k=s$, satisfies the definition 
of a pca. van Oosten \cite{vanOosten} calls this pca trivial. From now 
on we will only consider nontrivial pcas. Beeson~\cite[p100]{Beeson} 
includes nontriviality in the definition by requiring $k\neq s$. 
A nontrivial pca is always infinite, and can represent the natural numbers 
and all partial computable functions on the natural numbers. 

Since Theorem~\ref{Feferman} characterizes pcas, it is sometimes taken 
as a definition instead of a theorem. However, we would like to stress 
that $k$ and $s$, though they always exist,  
need not be considered as part of the signature of a pca. 
The reasons for this will become clear below when we talk about 
embeddings.

There is a large variety of examples of pcas in the literature. 
For example the lambda calculus itself is an example (where application is total). 
Further examples may be found in the monographs by 
Beeson~\cite{Beeson}, Odifreddi~\cite{Odifreddi}, 
van Oosten~\cite{vanOosten}, and Longley and Normann~\cite{LongleyNormann}.
Apart from their use in theoretical computer science, extensive use of pcas is 
being made in constructive mathematics, 
see Troelstra and van Dalen~\cite{TroelstravanDalenII}.
In particular, pcas may serve as a basis for models of constructive set theory, 
as for example in Rathjen~\cite{Rathjen} and 
Frittaion and Rathjen \cite{FrittaionRathjen}.
Here we only list the examples of pcas that play a role in this paper. 

The most important of all examples is Kleene's first model $\K_1$, 
which has as its elements the natural numbers, and application defined by  
\[
n \cdot m = \Phi_n(m),
\]
where $\Phi_n$ is the $n$\textsuperscript{th}
partial computable function. 
This is the setting of classical computability theory, and it 
underlines the need to consider partial applications. 
We can also relativize this to an arbitrary oracle $X$, thus obtaining 
the relativized pca $\K_1^X$. 

Kleene's second model $\K_2$, 
introduced in Kleene and Vesley~\cite{KleeneVesley}, 
is a pca defined on Baire space $\omega^\omega$.
Application $f \cdot g$ in this model can be informally 
described as applying the continuous functional with code $f$ 
to the real $g$. The original coding of $\K_2$ is somewhat contrived, 
but it is essentially equivalent to 
\begin{equation}\label{def:K2}
f\cdot g = \Phi_{f(0)}^{f\oplus g}, 
\end{equation}
where the application 
is understood to be defined if the RHS is total. 
This coding, used in Shafer and Terwijn \cite{ShaferTerwijn}, 
is considerably easier to work with. 
See the appendix of \cite{GolovTerwijn} for a proof (and precise 
statement) of the equivalence with the original coding. 

We can relativize the definition of $\K_2$ to an arbitrary oracle $X$ 
by restricting the definition of $\K_2$ to  $X$-computable sequences. 
This gives a countable pca $\K_2^X$ for every~$X$. 
Note that, in contrast to $\K_1$, the pca $\K_2^X$ is actually smaller 
than $\K_2$, which is uncountable. 
Taking $X$ computable gives the pca $\K_2^\mathrm{eff}$, 
consisting of all computable sequences, with application as in $\K_2$.

The van Oosten model $\B$, introduced in van Oosten~\cite{vanOosten1999}, 
is a variant of $\K_2$ that is obtained by extending the domain to include 
partial functions.
It can be described succinctly by taking the application in $\K_2$ as in 
\eqref{def:K2} to also include partial functions $f$ and $g$. 
By definition, an oracle computation $\Phi_e^{f\oplus g}(x)$ for partial functions 
is undefined as soon as it hits a query where the oracle is undefined. 
Like $\K_2$, the definition of $\B$ can be relativized to any oracle $X$
by restricting to the partial $X$-computable functions. 
This gives a pca $\B^X$ for any $X$, and when $X$ is computable we 
denote it by $\B^\eff$.
Note that unlike $\K_2$ and $\K_2^X$, both $\B$ and $\B^X$ are total 
combinatory algebras.

The graph model $\G$ was introduced in Scott~\cite{Scott},
as a continuation and improvement on his earlier work on models of the 
$\lambda$-calculus.\footnote{Scott 
also gives credit to the earlier authors Myhill and Shepherdson, Rogers, and 
Plotkin. He suggested the name graph model for $\G$ (p155) to prevent that it 
would be called Scott's model, so it is ironic that people now commonly refer 
to it as Scott's graph model.} 
$\G$ is a pca on the power set $\mathcal{P}(\omega)$, 
with application defined as 
\begin{equation}\label{appG}
X\cdot Y = \bigset{n\in\omega\mid \ex u (\la n,u\ra\in X \wedge D_u\subseteq Y)},
\end{equation}
where $D_u$ denotes the finite set with canonical code~$u$.
Note that $\G$ is in fact a total combinatory algebra. The connection with 
Rogers' notion of enumeration reducibility is that 
$Z\leq_e Y$ if and only if $Z = X\cdot Y$ for a c.e.\ set~$X$.

Define $\G^X$ as the smallest sub-pca of $\G$ containing $X$ and all c.e.\ sets. 
For $X$ computable, we denote $\G^X$ by $\E$, which is the usual notation for 
the class of c.e.\ sets. 
So $\E$ is the combinatory algebra of c.e.\ sets with application as in $\G$.

\begin{proposition}\label{prop:GX}
$\G^X = \{Z \in \mathcal{P}(\omega) \mid Z\leq_e X\}$.
\end{proposition}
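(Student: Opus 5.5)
Write $\mathcal{C} = \{Z \in \mathcal{P}(\omega) \mid Z\leq_e X\}$. The plan is a double inclusion. For $\G^X\subseteq\mathcal{C}$ it suffices, since $\G^X$ is the smallest sub-pca of $\G$ containing $X$ and all c.e.\ sets, to show that $\mathcal{C}$ is itself a sub-pca of $\G$ containing these sets. For $\mathcal{C}\subseteq\G^X$ it suffices to show that every $Z\leq_e X$ is obtained from $X$ and c.e.\ sets by applications. In both directions I will use the fact recalled just before the statement: $Z\leq_e Y$ iff $Z=W\cdot Y$ for some c.e.\ set $W$.

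\textbf{Inclusion $\mathcal{C}\subseteq\G^X$.} This direction is immediate. If $Z\leq_e X$, pick a c.e.\ set $W$ with $Z=W\cdot X$. Both $W$ and $X$ lie in $\G^X$, and $\G^X$ is closed under application, so $Z\in\G^X$.

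\textbf{Inclusion $\G^X\subseteq\mathcal{C}$.} Here we check the required properties of $\mathcal{C}$ one at a time.
\begin{enumerate}[label={\rm (\arabic*)}]
\item $X\in\mathcal{C}$, since $\leq_e$ is reflexive.
\item Every c.e.\ set $W$ is in $\mathcal{C}$, since $W\leq_e\emptyset\leq_e X$. Indeed, $\emptyset$ is enumeration reducible to anything.
\item $\mathcal{C}$ is closed under application: if $Y,Z\leq_e X$, then $Y\cdot Z\leq_e X$.
\item $\mathcal{C}$ is a pca. Since $\mathcal{C}$ contains all c.e.\ sets, it contains the c.e.\ sets $k$ and $s$ witnessing that $\G$ is a pca. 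The identities in Theorem~\ref{Feferman} hold in $\mathcal{C}$ because application is inherited from $\G$, and all applications in $\G$ are defined.
\end{enumerate}
By minimality of $\G^X$, we get $\G^X\subseteq\mathcal{C}$. This uses that $k,s$ in $\G$ can be chosen c.e.; the standard combinators of Scott's graph model are c.e.\ sets, given by effective definitions. Alternatively, one could avoid this by noting that any sub-pca is closed under application, and that closure under application already suffices for the argument.

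\textbf{Main obstacle: step (3).} The key point is to show directly that the map $(Y,Z)\mapsto Y\cdot Z$ is enumeration reducible to $Y\oplus Z$. By \eqref{appG}, $n\in Y\cdot Z$ iff there is $u$ with $\la n,u\ra\in Y$ and $D_u\subseteq Z$. This is a positive $\Sigma_1$ condition in positive information about $Y$ and $Z$. So there is an enumeration operator: for each $n$, $u$ and each finite set $D$, put the axiom $\langle n, D'\rangle$ into it, where $D'$ is the code for $\{2\la n,u\ra\}\cup\{2m+1 : m\in D_u\}$. This yields $Y\cdot Z\leq_e Y\oplus Z$. Then $Y,Z\leq_e X$ gives $Y\oplus Z\leq_e X$, which is standard: join the two enumeration operators. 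Transitivity of $\leq_e$ finishes the step. Transitivity itself is the standard composition of enumeration operators, or equivalently the fact that $\G$ has a composition combinator; I will cite it as a basic property of enumeration reducibility.
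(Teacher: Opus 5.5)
Your proof is correct and follows the paper's route. The inclusion $\{Z : Z\leq_e X\}\subseteq\G^X$ holds because $Z=W\cdot X$ for a c.e.\ $W$, and $\G^X\subseteq\{Z : Z\leq_e X\}$ holds because this class is closed under application; the paper only calls the latter intuitively clear and defers it to Scott, whereas you actually prove it via $Y\cdot Z\leq_e Y\oplus Z$, joins, and transitivity. One small caveat: your aside that closure under application alone lets you avoid c.e.\ combinators does not quite work, since identifying the smallest sub-pca with the applicative closure of $\{X\}\cup\E$ still needs combinators in that closure; your main argument does not rely on the aside, so this is harmless.
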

\begin{proof}
The inclusion $\supseteq$ holds by the remarks about enumeration reducibility above. 
For $\subseteq$, we need that $X\cdot W_e$ can be written as $W_d\cdot X$ 
for some $d$. Intuitively it is clear that we can enumerate $X\cdot W_e$  
given an enumeration for $X$. We give no formal proof, as this also 
follows from the results in Scott \cite{Scott} quoted below. 
\end{proof}

Note that $\G^X$ is different from the $X$-c.e.\ sets. For example, when $X$ is c.e.\ 
then $\G^X = \E$, but the $X$-c.e.\ sets are a larger class when $X$ is not computable. 


\begin{definition} \label{def:fingen}
Call a pca $\A$ {\em finitely generated\/} if there is a 
finite set $C\subseteq \A$ such that every element of $\A$ can be 
written as a closed term composed of elements of~$C$.
\end{definition}

For the record, we list the following examples, most of which are known:

\begin{theorem} \label{thm:fingen}
The following pcas are all finitely generated:
\begin{enumerate}[label={\rm (\roman*)}]

\item $\K_1$, $\K_2^\eff$, $\B^\eff$, $\E$,

\item $\K_1^X$, $\K_2^X$, $\B^X$, $\G^X$ for any $X$, 

\item term algebras such as CL and $\lambda$-calculus. 

\end{enumerate}
\end{theorem}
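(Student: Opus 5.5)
The plan is to treat each pca by exhibiting a small set of generators, consisting of $k$, $s$, a few fixed "universal" elements, and the oracle $X$ in the relativized cases. The guiding idea throughout is that in each of these models the elements are exactly the objects computable (or enumerable) relative to a fixed parameter, and a single universal element can turn a numeral code into the corresponding object. Part (iii) is immediate, since every closed term of CL is built from $k$ and $s$; for the $\lambda$-calculus of closed terms modulo $\beta$ (or $\beta\eta$), every closed term is equal to a combination of $k$ and $s$ via the standard translation. We may also assume that $k,s$ give Curry numerals $\bar n$ as closed terms, so all numerals are generated.

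For $\K_1$ (and $\K_1^X$) the argument is direct. Take $C=\{k,s,a\}$ with $a$ the index of the successor function, so that $a\cdot n=n+1$, together with $0$. Every natural number is then the closed term $a(a(\cdots(a0)))$, so in fact $\{0,a\}$ already suffices. The same works verbatim for $\K_1^X$.

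For $\K_2^X$, I would take generators $k,s$, $X$ itself (viewed as an element of Baire space), and one element $u$ such that $u\cdot(\text{code of } e)\cdot X$ yields the total function $\Phi_e^X$ when it is total. Numbers $n$ must first be represented as elements; I would use the constant sequences $\hat n$, all obtainable from $\hat 0$ and a successor element $\sigma$ with $\sigma\cdot f=f+1$ pointwise, both computable. Then I need $u$ with $u\cdot\hat e\cdot X=\Phi_e^X$. Writing $u\cdot\hat e$ as a partial application suitably padded, the requirement is simply that the map $(f,g)\mapsto \Phi_{f(0)}^g$ be $\K_2$-continuous and computable, which is clear from the coding \eqref{def:K2}. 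Totality is needed only on the arguments we use: $u\cdot\hat e$ must be defined, so I would arrange $u\cdot f=\la\text{code}, f\ra$ as a total element and defer the computation to the second application. For $\K_2^\eff$, drop $X$ (or take it computable). For $\B^X$ the same construction works, even more easily since application is total and partial functions are allowed.

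For $\G^X$, Proposition~\ref{prop:GX} says that every $Z\in\G^X$ satisfies $Z=W_d\cdot X$ for some $d$. So I would take generators $k,s,X$, the c.e.\ set $\{\la n,u\ra : \dots\}$ implementing successor on Scott numerals (or just use Curry numerals from $k,s$), and a c.e.\ set $U$ with $U\cdot \bar d=W_d$. Such a $U$ exists because $\bar d\mapsto W_d$ is an enumeration operator in $\G$: the numerals are specific finite or c.e.\ sets from which $d$ can be decoded, and all enumeration operators are represented in $\G$ by Scott's results. Then $Z=(U\bar d)X$ is a closed term, and $\E$ is the case of computable $X$.

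I expect the main obstacle to be the bookkeeping in $\K_2$: making sure the intermediate applications such as $u\cdot\hat e$ are total, so that they are genuine elements, and that the decoding of the numeral is continuous. Both are handled by having the first application merely pair its argument into the output and delay all real computation to the last argument.
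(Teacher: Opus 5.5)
Your proposal is correct. For $\K_1$, $\K_1^X$, $\K_2^X$, $\B^X$ and the term algebras it is essentially the paper's argument: a universal element applied to numerals. The differences there are cosmetic:
\begin{itemize}
\item the paper generates $\K_1$ with a single index $e$ satisfying $e\cdot e=0$ and $e\cdot 0=$ (index of successor), where you use the two generators $\{0,a\}$;
\item the paper builds the oracle into one $X$-computable element $j$ with $j\cdot h_n=\Phi_n^X$. It proves the $\B^X$ case and then notes that $j$ and the $h_n$ are total, so the same equation holds in $\K_2^X$. You instead keep $X$ as a separate generator and use a computable $u$ that pairs first and defers the computation. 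This settles the totality of $u\cdot\hat e$ in $\K_2^X$ directly.
\end{itemize}

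The real divergence is $\G^X$. The paper just cites Scott's Theorem~3.5: the finitely generated sub-pcas of $\G$ are exactly the $\G^X$, and each is even singly generated. You argue directly from Proposition~\ref{prop:GX}, writing each $Z\in\G^X$ as $(U\cdot\bar d)\cdot X$. This gives explicit generators. It is also genuinely independent of Scott. The inclusion you need, $\G^X\subseteq\{Z : Z\leq_e X\}$, is the one the paper justifies by appeal to Scott, but it also follows directly: $Z_1\cdot Z_2\leq_e Z_1\oplus Z_2$ and $\leq_e$ is transitive, so $\{Z : Z\leq_e X\}$ is a sub-pca containing $X$ and all c.e.\ sets. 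What your route does not give is Scott's converse or the one-generator bound.

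One point to tighten: enumeration operators are monotone, so ``decoding $d$ from $\bar d$'' has to work from finite positive information. With singleton numerals this is immediate:
\begin{itemize}
\item take $\{0\}$ as a generator;
\item take successor $S=\{\la n+1,u\ra : D_u=\{n\}\}$, so $S\cdot\{m\}=\{m+1\}$;
\item take $U=\{\la n,u\ra : D_u=\{d\},\ n\in W_d\}$.
\end{itemize}
Then $U\cdot\{d\}=W_d$, because every $D_u$ occurring in $U$ is a singleton $\{d'\}$, and $\{d'\}\subseteq\{d\}$ only when $d'=d$. For numerals built from $k$ and $s$ this decoding is not automatic and would need an extra argument, so commit to the singleton numerals.
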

\begin{proof}
Some of these examples, such as $\K_1^X$ and $\B^X$,  
have been used implicitly in \cite{GolovTerwijn}.
Note that the examples in (i) are special cases of those in~(ii).

$\K_1$ is finitely generated: Consider the code $e$ of a p.c.\ function 
such that $e\cdot e = 0$ and $e\cdot 0 = \text{successor function}$. 
Then the terms just containing $e$ yield all $n\in\omega$, hence $e$ 
by itself generates~$\K_1$. 
The same argument shows that $\K_1^X$ is also generated by one element. 

That $\B^X$ is finitely generated was discussed in footnote~2 in 
\cite{GolovTerwijn}. 
Namely we can define the Church numerals using only the basic combinators 
$s$ and $k$, and using these we can define the functions $h_n(x)=n$ 
for every~$n$. Then we choose $j\in\B^X$ such that 
\begin{equation} \label{BX}
\B^X \models j\cdot h_n = \Phi_n^X
\end{equation} 
for every~$n$.

That $\K_2^X$ is finitely generated follows from the argument for $\B^X$
just given. 
(Having the embedding $\K_2^X \emb \B^X$ by itself is not enough for 
this.)\footnote{
Embedding $\K_2^X$ into $\B^X$ (which is finitely generated) does not 
give that $\K_2^X$ is finitely generated, since a non-finitely generated pca 
may embed into a finitely generated one. 
For example, consider an infinite increasing chain $X_i$ of Turing degrees. 
Then the union of $\G^{X_i}$ is not finitely generated, but by 
Scott \cite{Scott} every countably generated sub-pca of $\G$ is included 
in a finitely generated one, namely we can take the join of the generators.}
Namely, the elements $j$ and $h_n$ in \eqref{BX} are in fact total, 
hence elements of $\K_2^X$, and since application in $\K_2^X$ is 
the same as in $\B^X$, for {\em total\/} $\Phi_n^X$ we also have 
$\K_2^X \models j\cdot h_n = \Phi_n^X$.

$\G^X$ is finitely generated: Scott~\cite[Theorem 3.5]{Scott} showed that 
the finitely generated sub-pcas of $\G$ are precisely those of the form $\G^X$.
(He also showed that $\G^X$ can be generated with only one generator.)
This also gives another proof of Proposition~\ref{prop:GX}.

That CL and the $\lambda$-calculus are finitely generated is well-known, 
cf.\ Barendregt~\cite{Barendregt}.
\end{proof}

Since every finitely generated pca is countable, we obviously 
have that uncountable pcas such as $\K_2$, $\B$, and $\G$ are 
not finitely generated. 
However, there are also examples of countable 
pcas that are not finitely generated.\footnote{An example of this is the 
countable product $\K_1^\omega$, with elements the eventually constant 
sequences, and pointwise application as in $\K_1$.}


We now discuss embeddings of pcas. 
To distinguish applications in different pcas, we write 
$\A\models ab\darrow$ if this application is defined in $\A$.

\begin{definition} \label{def:embedding}
For given pcas $\A$ and $\B$, an injection 
$f: \A \to \B$ is a \emph{weak embedding} if for all $a, b \in \A$, 
\begin{equation}\label{emb}
\A\models ab\darrow \; \Longrightarrow \;
\B\models f(a)f(b)\darrow \, = f(ab).
\end{equation}
If $\A$ embeds into $\B$ in this way we write $\A\hookrightarrow \B$.
If in addition to \eqref{emb}, for a specific choice of combinators 
$k$ and $s$ of $\A$, $f(k)$ and $f(s)$ serve as combinators for $\B$, 
we call $f$ a {\em strong embedding}. 
\end{definition}

Since by Theorem~\ref{Feferman} the combinators $k$ and $s$ always 
exist in any pca, it may seem insignificant if we consider them to 
be part of the signature or not, and consequently the difference 
between weak and strong embeddings might seem equally small. 
However, it does make a big difference. This is apparent for example 
in the study of completions of pcas (i.e.\ embeddings of pcas into 
total combinatory algebras). 
If we do not consider $k$ and $s$ to be part of the signature then 
(by a result of Engeler~\cite{Engeler}) every pca has a completion, but if we do 
then (by a result of Klop~\cite{Klop}) this is not the case. 
For more about completions see Terwijn~\cite{Terwijn2025}.
Both notions of embedding have been studies in the literature, 
sometimes by the same author, for example Bethke~\cite{Bethke}.
Consequences of embeddings of pcas (both weak and strong) 
for the resulting realizability models are discussed in Swan \cite{Swan}.
In addition to this, there is also an even weaker notion called 
applicative morphism, introduced in Longley~\cite{Longley}, that 
is useful in categorical contexts. 
In this paper we will focus on weak embeddings. 
As we will be proving nonembedding results, these will imply 
automatically the nonembedding results for strong embeddings.

Our notation is mostly standard.  
For computability theory we follow Odifreddi~\cite{Odifreddi}
and for partial combinatory algebra van Oosten~\cite{vanOosten}.
For unexplained notation we refer to these monographs.
In particular $\Phi^X_e$, $e\in\omega$, denotes the standard numbering of 
the partial computable functions relative to oracle $X$.

\section{Nonembeddings}

We consider weak embeddings of pcas. We have the following sequence of embeddings:
\begin{equation} \label{embs}
\K_1 \emb \K_2 \emb \B \emb \G.
\end{equation}
The first is by Shafer and Terwijn \cite{ShaferTerwijn}, 
the second is by inclusion, and 
the third is by Golov and Terwijn \cite{GolovTerwijn}.
Of course we have that $\K_2 \not\emb \K_1$, since $\K_1$ is countable 
and $\K_2$ is not, but it was open until now whether any of the other 
embeddings could be reversed. (This was posed as an open question in 
Golov and Terwijn \cite[Question 7.4]{GolovTerwijn}.) 
Below we show that $\B \not\emb \K_2$ and that $\G \not\emb \B$. 

Since $\G$ is related to enumeration reducibility, and $\K_2$ to Turing reducibility, 
the question whether $\G \not\emb \K_2$ seems related to embedding the 
enumeration degrees into the Turing degrees. 
It is known by a general result of Abraham and Shore \cite{AbrahamShore} 
(cf.\ Odifreddi \cite[p529]{Odifreddi})
that it is consistent (assuming CH) that the enumeration degrees are isomorphic 
to an initial segment of the Turing degrees (as an upper semilattice).
Now any ideal $I$ in the Turing degrees $\mathcal{D}_T$ gives 
a pca with the application of $\K_2$. 
(E.g.\ the relativized versions $\K_2^X$ of $\K_2$ are of this form.) 
In particular this is true for the embedded version of the 
enumeration degrees $\mathcal{D}_e$. 
However, the application $\cdot_{\K_2}$ may not match the application 
in $\G$, so this does not necessarily give an embedding of $\G$ into $\K_2$, 
and indeed our results shows that such an embedding is impossible.

\begin{theorem}\label{thm:BnotinK2}
$\B \not\emb \K_2$.
\end{theorem}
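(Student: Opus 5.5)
The plan is to suppose a weak embedding $F:\B\to\K_2$ exists and derive a contradiction from two constraints that $\K_2$ imposes. The first is that an application $F(a)\cdot F(b)$ in $\K_2$ is given by the Turing functional $\Phi_{F(a)(0)}^{F(a)\oplus F(b)}$. So whenever $\B\models ab=c$ we get $F(c)\leq_T F(a)\oplus F(b)$, and moreover $F(c)$ is obtained from $F(b)$ by a \emph{fixed} continuous (indeed $F(a)$-computable) functional on Baire space. The second is that, since $\B$ is a total combinatory algebra, the image $S=F[\B]$ is a subset of $\omega^\omega$ on which every $\K_2$-application is total. The aim is to exploit a feature of $\B$ with no counterpart in $\K_2$: elements of $\B$ can be partial functions, and application in $\B$ is monotone and Scott-continuous in the argument.

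Concretely, I would build inside $\B$ a configuration in which one fixed element $a\in\B$ acts as a "definedness test". Let $\bot$ be the empty function. For each $n$, let $c_n$ be the partial function defined only at $n$. Choose $a$ so that $a\cdot x$ is the total constant function $\hat 0$ if $x$ is defined somewhere, and $a\cdot x=\bot$ otherwise; this is computable, since $a$ searches for a convergent query. Then $\B\models a c_n=\hat0$ for all $n$ while $a\bot=\bot$, so $F(a)\cdot F(c_n)=F(\hat0)\neq F(\bot)=F(a)\cdot F(\bot)$. In $\K_2$ the map $g\mapsto F(a)\cdot g$ is continuous on Baire space. Hence if the $F(c_n)$ converged to $F(\bot)$, we would already have a contradiction.

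The images $F(c_n)$ are arbitrary total functions, so there is no a priori reason for them to converge to $F(\bot)$. The main work is therefore to manufacture convergence using more of the algebraic structure of $\B$. I would try to replace the $c_n$ by a family indexed by all finite partial functions $\sigma$, or better by the uncountable family of partial functions $x_A$ for $A\subseteq\omega$. For this family I would choose elements $p,q\in\B$ with $\B\models p\,x_A = x_{A'}$ for suitable operations $A\mapsto A'$, so that the images $F(x_A)$ form a closed-under-$F(p)$ family. A Baire category or compactness argument on $\{F(x_A)\}$ should then yield $A_k\to A$ with $F(x_{A_k})\to F(x_A)$, while the definedness test separates the $\B$-applications to $x_{A_k}$ from the one to $x_A$. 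The injectivity of $F$ together with the Turing-reducibility constraint $F(ab)\leq_T F(a)\oplus F(b)$ should supply the needed uniformity: uncountably many images cannot all be "far apart" in a way compatible with a single continuous functional.

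The step I expect to be the main obstacle is exactly this convergence step: forcing the images under $F$ of a suitably chosen family to accumulate in Baire space in the pattern dictated by $\B$. If the topological route fails, my fallback is a diagonalization. I would use an element of $\B$ that applies its argument to itself, and define, via the recursion theorem inside $\B$, an element whose $\B$-behaviour on $\bot$ diverges from the behaviour $\K_2$ is forced to give it by totality of every application on $S$.
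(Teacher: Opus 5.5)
There is a genuine gap, and it starts with the test element. The $a$ you describe, with $a\cdot c_n=\hat 0$ for all $n$ and $a\cdot\bot=\bot$, does not exist in $\B$. A $\B$-computation $\Phi_{a(0)}^{a\oplus x}$ becomes undefined as soon as it queries $x$ at a point where $x$ is undefined, so it cannot dovetail a search for a convergent query; that is the parallel behaviour of $\G$, not the sequential behaviour of $\B$. Precisely: since $a$ is fixed, the computation of $(a\cdot x)(0)$ up to its first query to $x$ does not depend on $x$. If there is no such query, then $(a\cdot x)(0)$ does not depend on $x$, contradicting $(a\cdot\bot)(0)\uarrow$ and $(a\cdot c_n)(0)=0$. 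If the first query is at $m_0$, then $(a\cdot c_m)(0)\uarrow$ for every $m\neq m_0$. (This lack of parallelism is what the paper exploits in the other direction, in Case~2 of $\G\not\emb\B$.)

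More fundamentally, the convergence step you flag is the entire content of the proof, and no \emph{fixed} unary test can supply it. For finitely many fixed unary maps $x\mapsto ax$, $x\mapsto px$, \ldots{} there is always an injection of $\B$ into Baire space making all of them continuous, even computable, on the image: send $x$ to a join of codes of all its iterated images, and each map then acts as a re-indexing. So continuity of such maps cannot produce a contradiction, however large the family $\{x_A\}$ is. Your fallback, as described, uses only $\bot$, a self-application element and a recursion-theorem fixed point (all in $\B^{\eff}$), together with totality of application on the image. It would therefore equally refute $\B^{\eff}\emb\K_2$, which is false, since every countable pca embeds into $\K_2$.

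The missing idea is to let the separating test take the limit point as an argument, i.e.\ to use a binary equality test. With $u=\bot$ and some $h\neq u$, take $a\in\B$ such that for total $f,g$, $a\cdot f\cdot g=u$ if $f=g$ and $a\cdot f\cdot g=h$ if $f\neq g$. This is sequentially computable, because searching for $k$ with $f(k)\neq g(k)$ only queries defined values. Fix $n$ with $F(u)(n)=y_0\neq y_1=F(h)(n)$. By continuity of application in $\K_2$, every total $f$ has a finite $\sigma_f\sqsubset F(f)$ with $(F(a)\cdot g_0\cdot g_1)(n)=y_0$ for all $g_0,g_1\sqsupset\sigma_f$. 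There are uncountably many total $f$ but only countably many strings, so some $f\neq g$ have $\sigma_f=\sigma_g$. This gives $(F(a)\cdot F(f)\cdot F(g))(n)=y_0$, whereas $F(a)\cdot F(f)\cdot F(g)=F(h)$ has value $y_1$ at $n$.

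This is the paper's proof. The accumulation you were trying to manufacture with $p$, $q$, Baire category and the bound $F(ab)\leq_T F(a)\oplus F(b)$ comes for free from pigeonhole once the test depends on one of the two points being compared.
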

\begin{proof} 
Suppose for a contradiction that $F:\B\emb\K_2$ is a weak embedding. 
So we have for all $f,g\in\B$, 
$$
\B\models f\cdot g = h \Longrightarrow 
\K_2\models F(f)\cdot F(g) \darrow = F(h).
$$
Note that application in $\B$ is total, so $f\cdot g$ is always defined. 
Let $u\in \B$ be totally undefined, i.e.\ $u(x)\uarrow$ for all $x$, 
and let $h\in\B$ be an element different from~$u$.
We can define $a\in\B$ such that for all {\em total\/} $f,g$, 
$$
\B\models a\cdot f\cdot g = 
\begin{cases}
u & \text{if $f=g$} \\
h & \text{if $f\neq g$.}
\end{cases}
$$ 
The result of $a\cdot f\cdot g$ for $f,g$ nontotal is immaterial.

Since $F$ is injective, we have $F(u)\neq F(h)$, and therefore
there are $n$ and $y_0 \neq y_1$ such that
$F(u)(n) = y_0$ and $F(h)(n)= y_1$.

For every total $f\in\B$ we have 
$$
\K_2\models (F(a)\cdot F(f) \cdot F(f))(n) = y_0.
$$ 
Hence for every total $f\in\B$ there exists a finite initial segment
$\sigma_f\sqsubset F(f)$ such that 
$$
\K_2\models (F(a)\cdot \sigma_f\cdot \sigma_f)(n) = y_0.
$$
(By this we mean that for any $g_0, g_1 \sqsupset \sigma_f$, 
$\K_2\models (F(a)\cdot g_0 \cdot g_1)(n) = y_0$.)
By counting there exist total $f,g\in\B$ such that $f\neq g$ and 
$\sigma_f = \sigma_g$.
Then we have 
$(F(a)\cdot \sigma_f\cdot \sigma_g)(n) = y_0$ (since $\sigma_f = \sigma_g$). 
But $f\neq g$, hence 
$F(a)\cdot F(f) \cdot F(g) = F(h)$, and $F(h)(n)= y_1$, a contradiction. 
\end{proof}

\begin{theorem}\label{thm:GnotinB}
$\G \not\emb \B$.
\end{theorem}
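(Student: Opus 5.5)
The plan is to adapt the counting argument from the proof of Theorem~\ref{thm:BnotinK2}. Application in $\B$ is continuous in the same way as in $\K_2$, with one addition: it is also \emph{monotone} in the extension order on partial functions. If $\Phi_e^{f\oplus g}(x)$ converges and $f\subseteq f'$, $g\subseteq g'$, then $\Phi_e^{f'\oplus g'}(x)$ converges to the same value. Moreover the computation uses only a finite subfunction of $f\oplus g$. Application in $\G$ is monotone and continuous with respect to $\subseteq$. So the argument cannot rest on continuity and monotonicity alone; it has to use the specific combinatorial shape of $\G$.

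Suppose $F:\G\emb\B$ is a weak embedding. I would first fix an uncountable family of pairwise $\subseteq$-incomparable sets, for instance $X_r=\{2k+r(k)\mid k\in\omega\}$ for $r\in 2^\omega$. Next I would choose a c.e. set $a\in\G$ which detects whether two members of the family differ. Concretely, $a\cdot X\cdot Y=\omega$ if there is a $k$ with $2k\in X$ and $2k+1\in Y$, or with $2k+1\in X$ and $2k\in Y$; otherwise $a\cdot X\cdot Y=\emptyset$. This gives $a\cdot X_r\cdot X_r=\emptyset$, and $a\cdot X_r\cdot X_s=\omega$ whenever $r\neq s$.

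Since $F$ is injective, $F(\emptyset)\neq F(\omega)$. The argument then splits according to the form of this difference.

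\textbf{Case 1: some $n$ has $F(\emptyset)(n)=y_0$ defined and $F(\omega)(n)\neq y_0$, possibly undefined.} Then for each $r$ the $\B$-computation $(F(a)\cdot F(X_r)\cdot F(X_r))(n)=y_0$ uses a finite subfunction $\sigma_r\subseteq F(X_r)$. There are countably many finite subfunctions and uncountably many $r$, so there are $r\neq s$ with $\sigma_r=\sigma_s$. By monotonicity, $(F(a)\cdot F(X_r)\cdot F(X_s))(n)=y_0$. But this value must equal $F(\omega)(n)$, which is a contradiction exactly as in Theorem~\ref{thm:BnotinK2}.

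\textbf{Case 2: $F(\emptyset)\subsetneq F(\omega)$.} This is the step I expect to be the main obstacle. The argument above no longer applies: any $\G$-definable test $c$ with $c\cdot X_r\cdot X_r=P$ and $c\cdot X_r\cdot X_s=Q$ must satisfy $P\subseteq Q$, by the same counting argument in $\G$. So we cannot simply swap the roles of the two constants. What I would try first is to show that $F$ restricted to the c.e. sets, or at least on a suitable subfamily, cannot be monotone from $\subseteq$ into extension of partial functions. The idea is to use an element such as the join combinator $J\cdot X\cdot Y=X\cup Y$ together with $k$, and to force two compatible partial functions $F(P)$ and $F(Q)$ with $P\not\subseteq Q$ and $Q\not\subseteq P$. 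One would then apply $a$-type tests that distinguish $P$ and $Q$ in both directions, so that Case 1 applies to at least one of the two pairs.

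Failing that, I would try to replace the constant outputs $\emptyset,\omega$ by a pair $P\subseteq Q$ for which $F(P)\not\subseteq F(Q)$, and prove that such a pair must exist. The tool for this would be that an $F$ preserving $\subseteq$ everywhere would transfer $\G$'s total, union-closed structure, in particular the existence of $\omega$ as a top element absorbing all applications ($\omega\cdot Y=\omega$), into $\B$. I would then derive a contradiction from the behaviour of $F(\omega)$ as a top element under application, using the totally undefined function in $\B$ as the contrast.
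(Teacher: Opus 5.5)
Your Case~1 is correct. Your fallback for Case~2 identifies the right dichotomy, which is the case split the paper uses. The same counting argument with your sets $X_r$ refutes every $F$ that is \emph{not} monotone from $\subseteq$ to extension: given $P\subsetneq Q$ and $n$ with $F(P)(n)\darrow\not\simeq F(Q)(n)$, use a test that always enumerates $P$ and enumerates $Q$ once it sees a disagreement between its two arguments.

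The genuine gap is the monotone case. This is where the content of the theorem lies, and your proposal gives no argument for it. Your first idea cannot work there: by your own observation any such test has outputs $P'\subseteq Q'$, and a monotone $F$ gives $F(P')\subseteq F(Q')$, so Case~1 never applies. The top-element idea also gives nothing. The equation $F(\omega)\cdot F(Y)=F(\omega)$ is harmless, since every total combinatory algebra has a $t$ with $t\cdot g=t$ for all $g$.

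More decisively, identify each $X\subseteq\omega$ with the partial function that is $0$ on $X$ and undefined elsewhere. This gives an isomorphic copy of $\G$ on partial functions with the following features:
its application is monotone and continuous in the extension order;
it is total;
it has an absorbing top and the totally undefined function as bottom;
$\G$ embeds into it monotonically.
So no argument that uses only these features of $\B$ can exclude a monotone $F$. You must use a property of $\B$ that this copy of $\G$ lacks. Your remark that the proof must use the shape of $\G$ is only half the story.

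The missing property is \emph{sequentiality} of $\B$, and its $\G$-side counterpart is a parallel-or. Take $R\in\G$ with $R\cdot X\cdot Y=\{0\}$ if $0\notin X\cup Y$ and $R\cdot X\cdot Y=\{0,1\}$ if $0\in X\cup Y$. Take sets $X\subseteq Y$ with $0\notin X$ and $0\in Y$. Then
$F(R)F(X)F(Y)=F(R)F(Y)F(X)=F(\{0,1\})$ and $F(R)F(X)F(X)=F(\{0\})$.
Monotonicity plus injectivity give $F(\{0\})\subsetneq F(\{0,1\})$. So there is an $n$ at which $F(R)F(X)F(X)$ diverges while both mixed applications converge.

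A $\B$-computation on partial oracles, including the nested two-stage one here, becomes undefined as soon as it queries an oracle at an undefined point. So in the divergent computation at $n$, one of the two copies of $F(X)$ is never queried where it is undefined. Replacing that copy by its extension $F(Y)$ leaves the computation unchanged, hence still divergent. This contradicts the convergence of the corresponding mixed application at $n$. This step is what your plan needs and does not contain.
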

\begin{proof} 
Suppose for a contradiction that 
$F:\G\emb\B$ is a weak embedding, so that for all $X,Y,Z\in\G$,  
$$
\G\models X\cdot Y =Z \Longrightarrow 
\B\models F(X)\cdot F(Y) = F(Z).
$$
Note that both $\G$ and $\B$ are total, so that the applications 
are always defined. 

Call the embedding $F$ {\em monotone\/} if 
$$
\fa A\subseteq B \,\fa n \,( F(A)(n)\darrow \Longrightarrow F(B)(n)\darrow = F(A)(n)).
$$
In the rest of the proof we treat the two cases where $F$ is or is not monotone 
separately. 

{\em Case 1.\/} Suppose that $F$ is not monotone. 
This case of the proof uses an idea similar to that of Theorem~\ref{thm:BnotinK2}.
Since $F$ is not monotone there exist sets $A\subsetneq B$ and $n\in\omega$ such 
that $F(A)(n)\darrow \,\not\simeq\, F(B)(n)$. 
Fix such $A,B$ and $n$, and suppose $F(A)(n)\darrow = y$.

We can easily define an enumeration operator $C\in\G$ such that 
for all $X,Y\in \G$, 
\begin{align*}
X=Y &\Longrightarrow \G\models C \cdot (X\oplus\cmp{X}) \cdot (Y\oplus\cmp{Y}) = A, \\
X\neq Y &\Longrightarrow \G\models C \cdot (X\oplus\cmp{X}) \cdot (Y\oplus\cmp{Y}) = B.
\end{align*}
Since $F$ is an embedding, we have 
$$
X=Y \Longleftrightarrow 
\B \models (F(C) \cdot F(X\oplus\cmp{X}) \cdot F(Y\oplus\cmp{Y}))(n)\darrow = y.
$$
In particular we have 
$$
\B \models (F(C) \cdot F(X\oplus\cmp{X}) \cdot F(X\oplus\cmp{X}))(n)\darrow = y
$$
for every $X$.
Hence for each $X$ there is a finite initial segment\footnote{Since 
$F(X\oplus\cmp{X})$ is a partial function, the initial segment 
$\sigma_X$ is a finite partial function. 
For a partial function $f$, by a finite initial 
segment $\sigma\sqsubset f$ we mean a finite partial function $\sigma$ 
such that for every $n$, $\sigma(n)\darrow$ implies $f(n)\darrow = \sigma(n)$
and also $\sigma(n)\uarrow$ implies $f(n)\uarrow$.}
$\sigma_X \sqsubset F(X\oplus\cmp{X})$
such that for all $g_0,g_1\sqsupset \sigma_X$, 
$$
(F(C) \cdot g_0 \cdot g_1)(n)\darrow = y.
$$
By counting we see that there exist $X\neq Y$ such that $\sigma_X = \sigma_Y$. 
We then have 
$$
(F(C) \cdot F(X\oplus\cmp{X}) \cdot F(Y\oplus\cmp{Y}))(n)\darrow = y
$$
because $\sigma_X = \sigma_Y \sqsubset F(X\oplus\cmp{X}), F(Y\oplus\cmp{Y})$.
On the other hand, since $X\neq Y$ we have 
$$
(F(C) \cdot F(X\oplus\cmp{X}) \cdot F(Y\oplus\cmp{Y}))(n) = F(B)(n) \not\simeq y.
$$
Thus we have a contradiction.

{\em Case 2.\/} Suppose that $F$ is monotone.
Let $R\in\G$ be such that for all sets $X$ and $Y$, 
\begin{itemize}

\item if $0\notin X\cup Y$ then $R\cdot X\cdot Y = \{0\}$, 

\item if $0\in X\cup Y$ then $R\cdot X\cdot Y = \{0,1\}$.

\end{itemize}
It is easy to see that such an enumeration operator $R$ can be defined in $\G$ 
(or even in the effective version $\E$). 

Now let $X\subseteq Y$ be sets such that $0\notin X$ and $0\in Y$. 
Then $R\cdot X\cdot Y = R\cdot Y\cdot X = \{0,1\}$ and 
$R\cdot X\cdot X = \{0\}$.
Hence in $\B$ we have 
$F(R)\cdot F(X)\cdot F(Y) = F(R)\cdot F(Y)\cdot F(X)$, 
and by monotonicity (and injectivity) of $F$ there exists $n$ 
in the domain of $F(R)\cdot F(X)\cdot F(Y)$ that is not 
in the domain of $F(R)\cdot F(X)\cdot F(X)$.
Now consider the undefined computation 
$(F(R)\cdot F(X)\cdot F(X))(n)$.
This computation in $\B$ can be undefined in one of two ways: 
It can be undefined because of a query to an undefined spot in 
the oracle $F(R)$ or one of the copies of $F(X)$, or all the oracle 
queries in the computation are defined, but the computation still 
does not converge. 
We claim that at least one of the copies of $F(X)$ 
is not queried on any input where it is undefined. 
This is clear if the computation does not query any of the three components 
in an undefined spot, and if the 
computation queries either $F(R)$ or one of the copies $F(X)$ on an 
undefined input, then the computation is immediately undefined for this 
reason, hence the other copy $F(X)$ is never queried on an 
undefined input. 
Suppose the first copy of $F(X)$ is never queried on an undefined input. 
Then by monotonicity, we can substitute $F(Y)$ for $F(X)$ to obtain
$$
(F(R)\cdot F(X)\cdot F(X))(n) = (F(R)\cdot F(Y)\cdot F(X))(n), 
$$
and the latter computation is defined on $n$, contradicting that the 
first is undefined. 
If the second copy of $F(X)$ is never queried on an undefined input 
we reach a contradiction in the same way. 
\end{proof}

\section{Relativized embeddings}

For any $X$, the embeddings \eqref{embs} restrict to:
\[
  \K_1^X \emb \K_2^X \emb \B^X \emb \G^{X \oplus \cmp X}.
\]
In particular, for $X$ computable we have 
\[
  \K_1 \emb \K_2^\eff \emb \B^\eff \emb \E.
\]
By Golov and Terwijn \cite{GolovTerwijn} we know that 
\[
  \B^X \not\emb \K_2^X \not\emb \K_1^X
\]
for any $X$, 
and also that $\E\not\emb\K_2^\eff$.
Until now it was open whether $\E \not\emb \B^\eff$. 
We prove this in Theorem~\ref{thm:EnotinBeff} below.
Note that this is the effective analog of Theorem~\ref{thm:GnotinB}.
Also note that we do have embeddings $\B^X \emb \K_2$ for every $X$, 
since $\B^X$ is countable and every countable pca embeds into $\K_2$ 
(\cite[Corollary 6.2]{GolovTerwijn}). 
However, it is not the case that for $X\leq_T Y$ the embedding of $\B^Y$
extends that of $\B^X$, so the embeddings of $\B^X$ do not amalgamate to 
an embedding of $\B$ into $\K_2$, which by Theorem~\ref{thm:BnotinK2} is 
indeed impossible.

\begin{theorem}\label{thm:EnotinBeff}
$\E \not\emb \B^\eff$.
\end{theorem}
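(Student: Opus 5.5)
The proof should follow the two-case structure of the proof of Theorem~\ref{thm:GnotinB}, with $\G$ replaced by $\E$ and $\B$ by $\B^\eff$. Suppose $F:\E\emb\B^\eff$ is a weak embedding, and call $F$ monotone exactly as before: for c.e.\ sets $A\subseteq B$, $F(B)$ extends $F(A)$ as a partial function.

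\emph{Monotone case.} I expect Case~2 of Theorem~\ref{thm:GnotinB} to transfer verbatim. The operator $R$ there is c.e., so $R\in\E$, as remarked in that proof. The witnesses can be chosen c.e.: take $X=\emptyset$ and $Y=\{0\}$. The remainder is a local argument about one divergent oracle computation $(F(R)\cdot F(X)\cdot F(X))(n)$ in $\B^\eff$. It uses only injectivity and monotonicity of $F$, and that application in $\B^\eff$ is the same as in $\B$. So this case goes through unchanged.

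\emph{Non-monotone case.} Here the proof of Theorem~\ref{thm:GnotinB} breaks down in two places. The sets $X\oplus\cmp X$ are not c.e. More seriously, the counting argument (uncountably many $X$, countably many finite $\sigma_X$) is unavailable, since $\E$ is countable. I would keep the setup: fix c.e.\ $A\subsetneq B$ and $n$ with $F(A)(n)\darrow=y$ and $F(B)(n)\not\simeq y$. Then use the monotone operator $C\in\E$ with $C\cdot X=A$ if $0\notin X$ and $C\cdot X=B$ if $0\in X$ (replace $0$ by a code if convenient). The finite initial segment $\sigma\sqsubset F(\emptyset)$ that witnesses $(F(C)\cdot F(\emptyset))(n)=y$ should be played off against sets $X\ni 0$ whose image $F(X)$ also extends $\sigma$.

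To produce such an $X$ without counting, I would try an effective diagonalisation inside the countable model. Use a uniformly c.e.\ family $X_s$, $s\in\omega$, where $0$ enters $X_s$ only at a stage determined by a $\Sigma^0_1$ event. The finitely many values of $F(C)$ and $F(X_s)$ that a convergent computation inspects are then a fixed finite amount of data. Pigeonholing over this family (infinitely many $s$, but the relevant $\sigma$ ranges over finite partial functions compatible with $F(\emptyset)$) should give $s\neq t$ with $0\in X_s$ and $0\notin X_t$ yet identical relevant initial segments. This would yield $(F(C)\cdot F(X_s))(n)=y$ while $F(B)(n)\not\simeq y$.

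The main obstacle is this last step. Since $F$ is arbitrary and non-computable, there is no a priori reason for any two distinct c.e.\ sets to have images sharing a long enough initial segment. A naive pigeonhole over finite objects therefore fails, because infinitely many distinct finite segments are available. If that fails, my fallback is to exploit the fixed-point (recursion) theorem in $\E$. I would build a c.e.\ set $Z$, via an element of $\E$ applied to itself, that enumerates $0$ exactly when some fixed partial computable search, namely evaluating $F(C)$ against finite approximations, halts with output $y$. The aim is that $Z$ both equals and differs from $\emptyset$ in the sense detected by $F$. Making this self-reference go through the embedding, rather than through $F$ directly (which is not computable), is the crux.
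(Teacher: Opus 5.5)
Your monotone case is fine and is the same as the paper's, which also just observes that $R\in\E$ and that the combinatorial argument carries over. The non-monotone case, however, has a genuine gap. The pigeonhole step fails for exactly the reason you give. Your recursion-theorem fallback stops precisely where it would have to compute something about the non-effective map $F$.

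The missing idea is that $\E$ is \emph{effectively} finitely generated, and this makes $F$ effective on c.e.\ indices. For instance, take c.e.\ operators $S$ and $U$ with $S\cdot\{j\}=\{j+1\}$ and $U\cdot\{j\}=W_j$. Then $W_m=U\cdot(S\cdot(\cdots(S\cdot\{0\})))$ with $m$ copies of $S$. Since $F$ preserves applications, $F(W_m)$ is the same term in $F(U),F(S),F(\{0\})$, evaluated in $\B^{\eff}$, where application is uniformly effective on indices. So an index for $F(W_m)$ is computable from $m$, using only three fixed indices as non-uniform information, even though $F$ itself is arbitrary. The paper phrases this via generators $G_1,\ldots,G_k$ (Theorem~\ref{thm:fingen}) and terms $t_m$ computable from $m$.

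Once you have this, neither your operator $C$ nor any pigeonhole is needed. Fix c.e.\ $A\subseteq B$ and $x,y$ with $F(A)(x)\darrow=y$ and $F(B)(x)\not\simeq y$. The paper takes a computable $d$ with $W_{d(n)}=A$ for $n\notin\emptyset'$ and $W_{d(n)}=B$ for $n\in\emptyset'$. It decides $\emptyset'$ by searching until either $n$ appears in $\emptyset'$ or $F(W_{d(n)})(x)\darrow=y$ is observed. This search always terminates and answers correctly, because $F(A)(x)=y$ while $F(B)(x)\not\simeq y$. This contradicts the undecidability of $\emptyset'$.

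Your fallback would also work at this point. By the recursion theorem, take $m$ such that $W_m$ enumerates $A$, and enumerates all of $B$ once $F(W_m)(x)\darrow=y$ is observed. Then $W_m=A$ and $W_m=B$ are both contradictory. Without the effective finite generation step, however, the non-monotone case remains unproved.
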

\begin{proof}
Suppose for a contradiction that 
$F:\E\emb\B^\eff$ is a weak embedding, so that for all $X,Y,Z\in\E$,  
\begin{equation}\label{emb2}
\E\models X\cdot Y =Z \Longrightarrow 
\B^\eff\models F(X)\cdot F(Y) = F(Z).
\end{equation}
Since $\E$ and $\B^\eff$ are total, all applications are always defined. 
Although this effective version of Theorem~\ref{thm:GnotinB} is different, 
if only for cardinality reasons (in the previous proof we 
used a counting argument), we make the same case distinction as before. 
So we call the embedding $F$ {\em monotone\/} if 
$$
\fa A\subseteq B \,\fa n \,( F(A)(n)\darrow \Longrightarrow F(B)(n)\darrow = F(A)(n)).
$$
In the rest of the proof we treat the two cases where $F$ is or is not monotone 
separately. 

{\em Case 1.\/} 
Suppose that $F$ is not monotone.
Then there exist c.e.\ sets $A,B\in \E$ with $A\subseteq B$ 
and $x\in\omega$ such that 
$F(A)(x)\darrow \,\not\simeq\, F(B)(x)$. 
Suppose $F(A)(x)\darrow = y$.

Now we can, using that $\E$ is finitely generated, 
show that $\emptyset'$ is decidable. 
Let $d$ be a computable function such that 
$$
W_{d(n)} = 
\begin{cases}
A & \text{if } n\notin\emptyset' \\
B & \text{if } n\in\emptyset'
\end{cases}
$$
for every $n$.
By Theorem~\ref{thm:fingen}, 
$\G^X$ is finitely generated for every $X$. 
In particular, $\E$ is finitely generated, say by generators $G_1,\ldots, G_k$. 
(In fact, Scott showed that one generator suffices.)
In fact, for every $n$ we can effectively compute a term $t_n(G_1,\ldots,G_k)$ 
composed entirely of generators of $\E$ such that $\E \models t_n(G_1,\ldots,G_k) = W_n$.
From this we can compute a code of $F(W_{d(n)})$ in $\B^\eff$ for every $n$. 
Namely, for every $n$ we have 
$\E \models t_{d(n)}(G_1,\ldots,G_k) = W_{d(n)}$.
By \eqref{emb2} we then have that the term 
$t_{d(n)}(F(G_1),\ldots, F(G_k))$ denotes $F(W_{d(n)})$ in $\B^\eff$.
So even if $F$ is noneffective, all we need are codes for the 
images of the generators $F(G_1),\ldots, F(G_k)$.
If $n\notin\emptyset'$ we have $F(W_{d(n)})(x)\darrow = y$, 
so we can decide whether $n\in\emptyset'$ by searching until 
either $n\in\emptyset'$ or $F(W_{d(n)})(x)\darrow = y$.

{\em Case 2.\/} 
Suppose that $F$ is monotone.
This case is in fact identical to the case in the 
previous proof that $\G\not\emb \B$ (Theorem~\ref{thm:GnotinB}).
We only need to observe that the enumeration operator 
$R$ defined  there, with the property that 
$R\cdot X\cdot Y = \{0\}$ if $0\notin X\cup Y$ and 
$R\cdot X\cdot Y = \{0,1\}$ if $0\in X\cup Y$, 
can indeed be chosen to be in $\E$. 
The rest of the argument is the same as before. 
\end{proof}

\begin{theorem}
$\G^{X \oplus \cmp X} \not\emb \B^X$ for any $X$. 
\end{theorem}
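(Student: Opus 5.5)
We will relativize the proof of Theorem~\ref{thm:EnotinBeff}. Write $Z = X\oplus\cmp X$. Suppose for a contradiction that $F:\G^{Z}\emb\B^X$ is a weak embedding. By Proposition~\ref{prop:GX}, $\G^Z$ consists of the sets enumeration reducible to $Z$, and these are exactly the $X$-c.e.\ sets. Since $\G^Z$ and $\B^X$ are both total, all applications are defined. As before, we split into cases according to whether $F$ is monotone, with monotonicity defined exactly as in the proof of Theorem~\ref{thm:GnotinB} but with $A\subseteq B$ ranging over $\G^Z$.

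\emph{Case 2 (monotone).} This is verbatim the argument of Case~2 of Theorem~\ref{thm:GnotinB}. The operator $R$ there is c.e., so it lies in $\E\subseteq\G^Z$. We then take $X_0=\emptyset$ and $Y_0=\{0\}$, which are both in $\G^Z$ and satisfy $X_0\subseteq Y_0$, $0\notin X_0$, $0\in Y_0$. The analysis of the undefined computation $(F(R)\cdot F(X_0)\cdot F(X_0))(n)$ in $\B^X$ uses only the definition of application in $\B$, which is the same in $\B^X$. So this case needs no change.

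\emph{Case 1 (not monotone).} Fix $A\subseteq B$ in $\G^Z$, a number $x$, and $y$ with $F(A)(x)\darrow=y\not\simeq F(B)(x)$. We will show that $X'$ is $X$-computable, which is a contradiction. The steps are as follows.

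\begin{enumerate}[label={\rm (\alph*)}]

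\item There is a computable $d$ such that $W^X_{d(n)}=A$ if $n\notin X'$ and $W^X_{d(n)}=B$ if $n\in X'$. To build it, enumerate $A$, and once $n$ enters $X'$ also enumerate $B$. This uses that $A$ and $B$ are $X$-c.e.\ and that $A\subseteq B$.

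\item By Theorem~\ref{thm:fingen}, $\G^Z$ is finitely generated, say by $G_1,\ldots,G_k$. From an $X$-c.e.\ index $e$ we can effectively compute a term $t_e$ over the generators with $\G^Z\models t_e(G_1,\ldots,G_k)=W^X_e$. Concretely, take the generators to be $Z$ together with a finite generating set for $\E$, whose terms for c.e.\ sets are uniform. Then $W^X_e = V\cdot Z$ for a c.e.\ operator $V$ whose index is computable from $e$, by the uniform correspondence between $X$-c.e.\ sets and sets enumeration reducible to $Z$.

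\item By the embedding property, $t_{d(n)}(F(G_1),\ldots,F(G_k))$ evaluates in $\B^X$ to $F(W^X_{d(n)})$. Fixing the finitely many indices of the $F(G_i)$ as partial $X$-computable functions, application in $\B^X$ is uniformly $X$-computable on indices. So we get an $X$-computable index for $F(W^X_{d(n)})$ uniformly in $n$.

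\item To decide whether $n\in X'$ using $X$, search in parallel for $n\in X'$ (an $X$-c.e.\ event) or for $F(W^X_{d(n)})(x)\darrow=y$. Exactly one of these happens: if $n\in X'$ then $F(B)(x)\not\simeq y$, and if $n\notin X'$ then $F(A)(x)=y$. Hence $X'\leq_T X$, a contradiction.

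\end{enumerate}

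The main obstacle is step~(b): obtaining uniform, effective terms in finitely many generators for every member of $\G^Z$ from its $X$-c.e.\ index. I would handle this by the explicit representation $W^X_e=V_e\cdot(X\oplus\cmp X)$ with $V_e$ c.e.\ uniformly in $e$, combined with the uniform terms for $\E$ used in Theorem~\ref{thm:EnotinBeff}.
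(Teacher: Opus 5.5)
Your proposal is correct and takes essentially the paper's approach: relativize the proof of $\E\not\emb\B^\eff$, with $X'$ in place of $\emptyset'$ and finite generation of $\G^{X\oplus\cmp X}$ in the non-monotone case, and copy the monotone case verbatim. Your explicit generating set ($X\oplus\cmp X$ together with generators for $\E$, using $W^X_e=V_e\cdot(X\oplus\cmp X)$ with $V_e$ uniform in $e$) fills in the uniform computability of the terms, which the paper only asserts.
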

\begin{proof}
This is the relativization of $\E \not\emb \B^\eff$. 
It is straightforward to check that the proof of the latter relativizes.
In the first part of the proof, where $F$ is not monotone, we have to 
replace $\emptyset'$ by $X'$, and use that 
$\G^X$ is finitely generated by Theorem~\ref{thm:fingen}.
The second part of the proof, where $F$ is monotone, is purely combinatorial
and can be copied almost verbatim.
\end{proof}

\section{Embedding $\K_2^X$ into $\K_1^Y$}

Let $f: \A \to \B$ be an embedding from pca $\A$ into pca $\B$, and let $a, b \in \A$.  If 
$\A\models 
ab\darrow$, then $\B\models f(a)f(b)\darrow = f(ab)$.  However, if $\A\models 
ab\uarrow$, then it may be that either $\B\models f(a)f(b)\uarrow$ or $\B\models 
f(a)f(b)\darrow$. 
 If also $\B\models f(a)f(b)\uarrow$ whenever $\A\models ab\uarrow$, we say that the 
embedding $f$ \emph{preserves undefined}.

In this section, we consider the question of whether $\K_2^X$ embeds into $\K_1^Y$ for oracles 
$X$ and $Y$.  We find that the answer depends on whether the embedding is required to 
preserve undefined.  First, we show that $\K_2^X$ embeds into $\K_1^Y$ via an embedding 
that preserves undefined if and only if $X'' \leq_T Y$.  Second, we show that $\K_2^X$ embeds 
into $\K_1^{X'}$ via an embedding that does not preserve undefined.  This confirms the first part 
of~\cite[Conjecture~6.20]{GolovTerwijn}.  By~\cite[Theorem~8.4]{ShaferTerwijn} (see 
also~\cite[Section~4]{GolovTerwijn}), $K_1^X \emb K_1^Y$ if and only if $X \leq_T 
Y$.\footnote{\cite[Theorem~8.4]{ShaferTerwijn} 
considers embeddings that preserve undefined, but 
the same proof works for embeddings that do not necessarily preserve undefined.}  Therefore, if 
$X' \leq_T Y$, then $\K_2^X \emb \K_1^{X'} \emb \K_1^Y$, hence $\K_2^X \emb \K_1^Y$.  
Conversely, if $\K_2^X \emb \K_1^Y$, then $X' \leq_T Y$ by~\cite[Theorem~6.9]{GolovTerwijn}.  
Thus $\K_2^X \emb \K_1^Y$ if and only if $X' \leq_T Y$.

\begin{lemma} \label{lem:K2embK1presundef}
For every oracle $X$, $\K_2^X$ embeds into $\K_1^{X''}$ via an embedding that preserves 
undefined.
\end{lemma}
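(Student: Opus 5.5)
Our plan is to map each element $f\in\K_2^X$ to a carefully chosen index of $f$ as an $X$-computable function, and to simulate $\K_2$-application inside $\K_1^{X''}$. The key observation is the complexity of the relevant predicates. The set $\mathrm{Tot}^X=\{e \mid \Phi_e^X \text{ total}\}$ is $\Pi^X_2$, hence $X''$-computable. Given indices $e,d$ of total functions $f=\Phi_e^X$ and $g=\Phi_d^X$, the predicate ``$f\cdot g\darrow$ in $\K_2$'' means that $\Phi_{f(0)}^{f\oplus g}$ is total. This is also $\Pi^X_2$: by the $s$-$m$-$n$ theorem we can compute an index $c(e,d)$ with $\Phi_{c(e,d)}^X=\Phi_{f(0)}^{f\oplus g}$, so the predicate becomes $c(e,d)\in\mathrm{Tot}^X$. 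Thus $X''$ can decide definedness of applications, which is exactly what preserving undefined requires.

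To obtain injectivity, we assign to each $f\in\K_2^X$ a canonical index $F(f)$, namely the least $e$ with $\Phi_e^X=f$. Equality of total $X$-computable functions is $\Pi^X_1$ relative to $\mathrm{Tot}^X$-information, so the map $e\mapsto$ (least index of $\Phi_e^X$, provided $\Phi_e^X$ is total) is $X''$-computable. Here $X''$ checks totality and then, for each smaller $e'$, checks whether $\Phi_{e'}^X$ is total and agrees with $\Phi_e^X$, a $\Pi^X_1$ question decidable from $X'$. Call this partial $X''$-computable function $\mathrm{can}$. Its range $R$ is the set of canonical indices, and $R$ is $X''$-decidable.

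We then define an $X''$-computable partial function $\psi(a,b)$ as follows. If $a,b\in R$ and $c(a,b)\in\mathrm{Tot}^X$, output $\mathrm{can}(c(a,b))$; otherwise diverge. Two difficulties remain. First, we need $F(f)\cdot F(g)$ in $\K_1^{X''}$ to equal $\psi(F(f),F(g))$. Second, $F(f)\cdot m$ must be undefined whenever $f\cdot g$ is undefined in $\K_2$ for the $g$ with $F(g)=m$. Both force us to choose the coding so that, as elements of $\K_1^{X''}$, the numbers $F(f)$ are indices with $\Phi^{X''}_{F(f)}(m)\simeq\psi(F(f),m)$. The main obstacle is that $F(f)$ must simultaneously be an $X$-index of $f$ and an $X''$-index of the curried map. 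We therefore decouple the two roles: define $F(f)$ to be an $X''$-index $p(\mathrm{can}^{-1}\ldots)$. Concretely, by the $s$-$m$-$n$ theorem (relative to $X''$), fix a computable injective $p$ with $\Phi^{X''}_{p(e)}(m)\simeq\psi'(e,m)$, and set $F(f)=p(e_f)$, where $e_f$ is the canonical $X$-index of $f$. Here $\psi'(e,m)$ first checks, $X''$-computably, whether $m$ lies in the range of $p\circ\mathrm{can}$ (decidable since $p$ is injective and computable with, say, increasing values), recovers $d$ with $m=p(d)$, tests $c(e,d)\in\mathrm{Tot}^X$, and outputs $p(\mathrm{can}(c(e,d)))$, diverging otherwise.

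Verification is then routine. $F$ is injective because both $p$ and canonical indices are injective. If $f\cdot g=h$ in $\K_2^X$, then $\Phi^{X''}_{F(f)}(F(g))=p(\mathrm{can}(c(e_f,e_g)))=p(e_h)=F(h)$. If $f\cdot g\uarrow$, then $c(e_f,e_g)\notin\mathrm{Tot}^X$, so the computation diverges, and $F$ therefore preserves undefined.
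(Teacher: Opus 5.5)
There is a gap: the map $p$ is defined circularly, and the $s$-$m$-$n$ theorem you cite cannot produce it. You want $\Phi^{X''}_{p(e)}(m)\simeq\psi'(e,m)$, but $\psi'$ itself refers to $p$ twice. It inverts $p$ on its input $m$, and it outputs $p(\mathrm{can}(c(e,d)))$. The $s$-$m$-$n$ theorem gives an index function only for a partial function fixed in advance. It cannot give a family of programs whose outputs are again indices from that same family.

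This self-reference is the real obstacle: an image element, applied to another image element, must return an index of a program of the same kind. It is not resolved by separating $X$-indices from $X''$-indices, and it is why the paper's proof opens with the recursion theorem. The standard repair works. Let $s(z,e)$ be an injective $s$-$m$-$n$ function, increasing in $e$, with $\Phi^{X''}_{s(z,e)}(m)\simeq\Phi^{X''}_z(\langle e,m\rangle)$. Let $\theta(z,\langle e,m\rangle)$ be your $\psi'$ with every occurrence of $p$ replaced by $s(z,\cdot)$; this is partial $X''$-computable in all its arguments. The relativized recursion theorem gives $z_0$ with $\Phi^{X''}_{z_0}\simeq\theta(z_0,\cdot)$. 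Then $p=s(z_0,\cdot)$ has exactly the property you need, and the rest of your verification is correct as written: injectivity, $F(f)\cdot F(g)=F(h)$, and divergence when $f\cdot g\uarrow$.

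Once patched, your argument is essentially the paper's. Both map $f$ to an $X''$-index built from the least $X$-index of $f$. Both use $X''$ to decide totality of $\Phi_{f(0)}^{f\oplus g}$ and to find the least index of the result. The only real difference is how the argument's $X$-index is recovered. The paper makes each image element return its underlying least $X$-index on input $0$, so it recovers $b$ as $\Phi^{X''}_n(0)$, and injectivity of $f\mapsto e a_f$ follows immediately. You instead invert the increasing function $p$ and test membership in the $X''$-decidable set $R$ of canonical indices, which requires the extra bookkeeping about $p$ being injective and increasing.
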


\begin{proof}
Using the recursion theorem, let $e$ be an index such that $\Phi_e^{X''}(a)$ produces another 
index $p = p(a) > 0$ for a machine $\Phi_{p(a)}^{X''}$ behaving as follows.
\begin{itemize}
\item $\Phi_{p(a)}^{X''}(0) = a$.
\item For $n > 0$, $\Phi_{p(a)}^{X''}(n)$:
\begin{itemize}
\item First runs $\Phi_n^{X''}(0)$.  If it halts, lets $b = \Phi_n^{X''}(0)$.
\item Then uses $X''$ to determine whether $(\Phi_a^X) \cdot (\Phi_b^X) \darrow$ (i.e., is total).
\item If $(\Phi_a^X) \cdot (\Phi_b^X) \darrow$, uses $X''$ to find the least $c$ such that 
$\Phi_c^X 
= (\Phi_a^X) \cdot (\Phi_b^X)$ and returns $\Phi_e^{X''}(c)$.
\item Here $\Phi_{p(a)}^{X''}(n)\uarrow$ if either $\Phi_n^{X''}(0)\uarrow$ or $(\Phi_a^X) \cdot 
(\Phi_b^X) \uarrow$.
\end{itemize}
\end{itemize}
For each $f \in \K_2^X$, let $a_f$ be its \emph{least representative}, i.e., the least number $a_f$ 
such that $\Phi_{a_f}^X = f$.  We show that $f \mapsto e a_f$ embeds $\K_2^X$ into 
$K_1^{X''}$ 
and preserves undefined.

Note that for any $a$, $ea0 = p(a) \cdot 0 = \Phi_{p(a)}^{X''}(0) = a$.  Thus if $f \neq g$, then 
$a_f \neq a_g$, so $e a_f \neq e a_g$ because $e a_f 0 = a_f \neq a_g = e a_g 0$.  Hence the 
mapping is injective.

Suppose that $fg \darrow = h$ in $\K_2^X$.  Then
\begin{align*}
(e a_f) \cdot (e a_g) = \Phi_{e a_f}^{X''}(e a_g) = \Phi_{p(a_f)}^{X''}(e a_g) = \Phi_e^{X''}(a_h) = e 
a_h.
\end{align*}
For the third equality, $e a_g 0 = a_g$ as explained above, and $(\Phi_{a_f}^X) \cdot 
(\Phi_{a_g}^X) 
= fg = h$ is total, so $a_h$ is the least $c$ such that $\Phi_c^X = (\Phi_{a_f}^X) \cdot 
(\Phi_{a_g}^X)$.

If instead $fg \uarrow$ in $\K_2^X$, then $(\Phi_{a_f}^X) \cdot (\Phi_{a_g}^X)\uarrow$, so $(e 
a_f) \cdot (e a_g) = \Phi_{e a_f}^{X''}(e a_g)\uarrow$ in $\K_1^{X''}$.
\end{proof}

\begin{lemma} \label{lem:presundefjj}
For oracles $X$ and $Y$, if $\K_2^X$ embeds into $\K_1^Y$ via an embedding that preserves 
undefined, then $X'' \leq_T Y$.
\end{lemma}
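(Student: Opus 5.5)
The plan is to show that the index set $\mathrm{Tot}^X=\{e\mid \Phi_e^X \text{ is total}\}$, which is $\Pi_2^X$-complete and hence Turing equivalent to $X''$, is $\Sigma_1^Y$. Then we use this twice: once to get $X'\leq_T Y$, and once more to get that $\mathrm{Tot}^X$ and its complement are both $\Sigma_1^Y$. Fix an embedding $F:\K_2^X\emb\K_1^Y$ that preserves undefined.

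\emph{Step 1: uniform computation of images.} By Theorem~\ref{thm:fingen}, $\K_2^X$ is finitely generated, say by $G_1,\ldots,G_k$. As in the proof of that theorem, the constant functions $h_e(x)=e$ are obtained by closed terms $t_e(G_1,\ldots,G_k)$ (Church numerals built from $s,k$), and $t_e$ is computable from $e$. Every application inside $t_e$ is defined in $\K_2^X$. So by the embedding property, $t_e(F(G_1),\ldots,F(G_k))$ evaluates in $\K_1^Y$ to $F(h_e)$. Since application in $\K_1^Y$ is $Y$-computable and the relevant applications halt, the map $e\mapsto F(h_e)$ is $Y$-computable; for this we only need the finitely many numbers $F(G_i)$ as constants.

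\emph{Step 2: coding totality into definedness.} Fix $j\in\K_2^X$ such that $j(0)$ is an index of a machine that reads $e$ from $h_e$ and computes $\Phi_e^X$, using $X$ coded into the remaining values of $j$. Then $\K_2^X\models j\cdot h_e\darrow$ if and only if $\Phi_e^X$ is total. Because $F$ is an embedding that preserves undefined, $e\in\mathrm{Tot}^X$ if and only if $\Phi^Y_{F(j)}(F(h_e))\darrow$. By Step~1 this condition is $\Sigma_1^Y$. More generally, the same construction (replacing $h_e$ by any uniformly obtainable argument) shows that every $\Pi_2^X$ set is $\Sigma_1^Y$.

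\emph{Step 3: bootstrapping.} Both $X'$ and its complement are $\Pi_2^X$, so both are $\Sigma_1^Y$, and hence $X'\leq_T Y$. Consequently every $\Sigma_2^X=\Sigma_1^{X'}$ set is $\Sigma_1^Y$; in particular the complement of $\mathrm{Tot}^X$ is $\Sigma_1^Y$. Combined with Step~2, $\mathrm{Tot}^X$ is $\Delta_1^Y$, so $X''\equiv_T\mathrm{Tot}^X\leq_T Y$.

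The main obstacle is the asymmetry in Step~2: undefinedness in $\K_2^X$ is only a $\Sigma_2^X$ condition and cannot itself be realized as definedness. This gives a priori only $\Pi_2^X\subseteq\Sigma_1^Y$, and not the complementary inclusion. The bootstrapping through $X'$ in Step~3 is what closes this gap, so the point to verify carefully is that $X'$ and its complement really are both reached by Step~2. They are, since $\Sigma_1^X\cup\Pi_1^X\subseteq\Pi_2^X$. One should also check the routine uniformity claims: that $j$ can be chosen $X$-computable, and that $t_e$ is computable from $e$.
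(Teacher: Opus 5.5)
Your proof is correct. Its core is the same as the paper's. Both arguments code $\mathrm{TOT}^X$ into the definedness of one application. You use $j\cdot h_e$ with $X$ stored in $j$; the paper uses $p\cdot(n\concat X)$ with $X$ stored in the argument. Both then use preservation of undefined to conclude that $\mathrm{TOT}^X$ is $\Sigma^Y_1$, and both compute the images of the arguments uniformly from finitely many fixed images.

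The real difference is how you get $X'\leq_T Y$. The paper cites \cite[Theorem~6.9]{GolovTerwijn}, which yields $X'\leq_T Y$ for any weak embedding, whether or not it preserves undefined. You derive it internally instead, by showing that every $\Pi^X_2$ set $S$ is $\Sigma^Y_1$ and applying this to $X'$ and $\overline{X'}$. To make your phrase ``any uniformly obtainable argument'' precise, take a computable $f$ with $S\leq_m \mathrm{TOT}^X$ via $f$. Then $S=\{n : \Phi^Y_{F(j)}(F(h_{f(n)}))\darrow\}$. Your route makes the lemma self-contained, but it uses the preserve-undefined hypothesis for the first jump as well as the second. The paper's route makes clear that the first jump bound holds for arbitrary embeddings.

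Your appeal to finite generation of $\K_2^X$ is more than you need. It suffices to fix $h_0$ and an element $q$ with $q\cdot h_m = h_{m+1}$. Then $h_e = q(q\cdots(q\,h_0))$, and $F(h_e)$ is computed from $Y$ by iterating $F(q)$ on $F(h_0)$ in $\K_1^Y$. This is exactly the paper's $b^{\bar n}c$, and it avoids relying on the rather sketchy Church-numeral argument behind Theorem~\ref{thm:fingen}.
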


\begin{proof}
Let $F$ be an embedding of $\K_2^X$ into $\K_1^Y$ that preserves undefined.  Then $X' \leq_T 
Y$ by~\cite[Theorem~6.9]{GolovTerwijn}.  We use the embedding to show that 
$\mathrm{TOT}^X 
= \{e : \text{$\Phi_e^X$ is total}\}$ is c.e.\ relative to $Y$.  We already know that the 
complement $\overline{\mathrm{TOT}^X}$ is c.e.\ relative to $Y$ because $X' \leq_T Y$.  Hence 
it follows that $X'' \equiv_T \mathrm{TOT}^X \leq_T Y$.

Let $e_0$ be an index so that $\Phi_{e_0}^{f \oplus g} \simeq \Phi_{g(0)}^{g^-}$ for all $f$ and 
$g$, where $g^-(n) = g(n+1)$.  Let $e_1$ be an index so that
\begin{align*}
\Phi_{e_1}^{f \oplus g}(n) =
\begin{cases}
g(0)+1 & \text{if $n = 0$}\\
g(n) & \text{if $n > 0$}
\end{cases}
\end{align*}
for all $f$, $g$, and $n$.

Let $p = {e_0} \concat 0^\omega$, $q = {e_1} \concat 0^\omega$, and $g = 0 \concat X$.  These 
are all members of $\K_2^X$.  Observe that, for example, $q(q(qg))) = 3 \concat X$.  Write 
$q^{\bar{n}}g$ to denote $q(q \cdots (qg))$, with $n$ right-to-left applications of $q$.  Then 
$q^{\bar{n}}g = n \concat X$.  Thus
\begin{align*}
p \cdot (q^{\bar{n}}g) =
\begin{cases}
\Phi_n^X & \text{if $\Phi_n^X$ is total}\\
\uarrow & \text{otherwise}.
\end{cases}
\end{align*}
Let $a = F(p)$, $b = F(q)$, and $c = F(g)$.  Then
\begin{align*}
b^{\bar{n}}c = F(q)^{\bar{n}}F(g) = F(q^{\bar{n}}g) = F(n \concat X).
\end{align*}
In particular, $b^{\bar{n}}c\darrow$.  Thus
\begin{align*}
a \cdot (b^{\bar{n}}c) = F(p)F(n \concat X) =
\begin{cases}
F(p \cdot n \concat X) = F(\Phi_n^X) & \text{if $\Phi_n^X$ is total}\\
\uarrow & \text{otherwise}.
\end{cases}
\end{align*}
because $F$ preserves undefined.  Hence we have shown that $a \cdot (b^{\bar{n}}c)\darrow$ if 
and only if $\Phi_n^X$ is total.  That is, $\mathrm{TOT}^X = \{n : \K_1^Y \models a \cdot 
(b^{\bar{n}}c)\darrow \}$, which is c.e.\ relative to $Y$.  Thus $\mathrm{TOT}^X$ is c.e.\ relative 
to $Y$, so $X'' \leq_T Y$ as explained above.
\end{proof}

\begin{theorem}
For oracles $X$ and $Y$, $\K_2^X$ embeds into $\K_1^Y$ via an embedding that preserves 
undefined if and only if $X'' \leq_T Y$.
\end{theorem}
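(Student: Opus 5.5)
The theorem follows by combining Lemma~\ref{lem:K2embK1presundef} and Lemma~\ref{lem:presundefjj} with the fact, quoted above from \cite[Theorem~8.4]{ShaferTerwijn}, that $\K_1^{X''} \emb \K_1^Y$ whenever $X'' \leq_T Y$. The only point needing care is that the composite embedding still preserves undefined.

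For the forward direction, suppose $\K_2^X$ embeds into $\K_1^Y$ via an embedding that preserves undefined. Then Lemma~\ref{lem:presundefjj} gives $X'' \leq_T Y$ immediately.

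For the converse, assume $X'' \leq_T Y$. Lemma~\ref{lem:K2embK1presundef} gives an embedding $G\colon \K_2^X \emb \K_1^{X''}$ that preserves undefined. By \cite[Theorem~8.4]{ShaferTerwijn} there is an embedding $H\colon \K_1^{X''} \emb \K_1^Y$. The footnote above records that this theorem is stated there for embeddings that preserve undefined, so we may take $H$ to preserve undefined. We then check that $H\circ G$ is the required embedding.

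\begin{itemize}
\item It is injective, since $G$ and $H$ are.
\item If $ab\darrow$ in $\K_2^X$, then $G(a)G(b)\darrow = G(ab)$ in $\K_1^{X''}$, and so $H(G(a))H(G(b))\darrow = H(G(ab))$ in $\K_1^Y$.
\item If $ab\uarrow$ in $\K_2^X$, then $G(a)G(b)\uarrow$ because $G$ preserves undefined, and then $H(G(a))H(G(b))\uarrow$ because $H$ preserves undefined.
\end{itemize}

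The main (mild) obstacle is confirming that the $\K_1$-to-$\K_1$ embedding preserves undefined, since the composition argument depends on it. If one wants to avoid relying on the cited theorem, a direct construction is available. Let $Y$ compute $X''$ via a fixed reduction, and fix a computable map $m \mapsto \hat m$ such that $\Phi_{\hat m}^Y \simeq \Phi_m^{X''}$ for all $m$. Then define $H(m) = \langle m, \hat m\rangle$-style codes, taken as indices for $\Phi^Y$ machines obtained via the recursion theorem, on the same pattern as the proof of Lemma~\ref{lem:K2embK1presundef}. Each such machine decodes its argument and simulates $\Phi_m^{X''}$ on it, diverging exactly when the simulated computation diverges. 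This makes preservation of undefined evident.
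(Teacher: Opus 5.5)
Your proposal is correct and follows the paper's proof. One direction is Lemma~\ref{lem:presundefjj}, and the other composes Lemma~\ref{lem:K2embK1presundef} with the undefined-preserving embedding $\K_1^{X''}\emb\K_1^Y$ obtained from $X''\leq_T Y$; your check that the composite preserves undefined only spells out what the paper leaves implicit, and the optional direct construction is not needed.
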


\begin{proof}
Suppose that $X'' \leq_T Y$.  Then
\begin{align*}
\K_2^X \emb \K_1^{X''} \emb \K_1^Y
\end{align*}
by embeddings that preserve undefined by Lemma~\ref{lem:K2embK1presundef} and the 
assumption $X'' \leq_T Y$.

Conversely, if $\K_2^X \emb \K_1^Y$ by an embedding that preserves undefined, then $X'' 
\leq_T Y$ by Lemma~\ref{lem:presundefjj}.
\end{proof}

Now we embed $\K_2^X$ into $K_1^{X'}$ (by an embedding that cannot preserve undefined) by 
first embedding $\K_2^X$ into a binary version that we call $\K_{2, 01}^X$ and then embedding 
$\K_{2, 01}^X$ into $\K_1^{X'}$.  First we define $\K_{2, 01}^X$ and show that it is a pca that 
embeds $\K_2^X$.

\begin{definition}
Let $\K_{2,01}$ be the partial applicative system on $2^\omega$ with application
\begin{align*}
A \cdot B =
\begin{cases}
\uarrow & \text{if $A = 0^\omega$}\\
\Phi_e^{A \oplus B} & \text{if $e$ is least with $A(e) = 1$}.
\end{cases}
\end{align*}
Here we assume that machines are $\{0,1\}$-valued, say by taking outputs $\mod 2$ as 
necessary.

For an oracle $X$, $\K_{2,01}^X$ is $\K_{2,01}$ restricted to binary sequences $A \leq_T X$.
\end{definition}

\begin{proposition}
$\K_{2,01}$ is a pca.  For every oracle $X$, $\K_{2, 01}^X$ is a pca.
\end{proposition}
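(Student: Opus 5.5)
By Theorem~\ref{Feferman}, it suffices to exhibit elements $k$ and $s$ of $\K_{2,01}$ satisfying the combinator equations. The same elements will be computable, hence in every $\K_{2,01}^X$. Closure of $\K_{2,01}^X$ under application is immediate: if $A,B\leq_T X$ and $A\cdot B\darrow$, then $A\cdot B=\Phi_e^{A\oplus B}\leq_T X$. So the relativized statement follows from the unrelativized one once $k,s$ are computable.

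The basic tool is a coding lemma. For any index $e$ and any finite binary string $\tau$, let $\langle e;\tau\rangle$ denote the sequence $0^e1\concat\tau\concat 0^\omega$. This sequence is computable and its application behaves as $\Phi_e^{(\,\cdot\,)\oplus B}$. A machine $e$ can read off the "parameter" $\tau$ from its own oracle, so I will design machines whose first argument encodes data.

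To handle data that is itself an infinite sequence, such as $A$ in $kA$, I will let the result be $0^{e}1\concat A'$, where $A'$ interleaves $A$ with a fixed pattern so that it is computably decodable from the tail. Concretely, the result is $0^{e}1\concat A$, and machine $e$, on oracle $C\oplus B$, strips the first $e+1$ bits of $C$ to recover $A$. By the $s$-$m$-$n$ and recursion theorems I can choose the relevant indices $e$ in advance.

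The construction proceeds as follows.

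\textbf{Combinator $k$.} Pick $e_1$ such that $\Phi_{e_1}^{C\oplus B}$ outputs the tail of $C$ after position $e_1$. Pick $e_0$ such that $\Phi_{e_0}^{K\oplus A}=0^{e_1}1\concat A$. Set $k=0^{e_0}1\concat0^\omega$. Then $kA=0^{e_1}1\concat A$ is total, and $kAB=A$.

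\textbf{Combinator $s$.} I need $sA$ and $sAB$ to be total, and $sABC\simeq AC(BC)$. Let $sA=0^{e_2}1\concat A$, and let $sAB=0^{e_3}1\concat(A\oplus B)$. Machine $e_3$, given $D\oplus C$, decodes $A$ and $B$ from $D$. It then computes $AC$ by finding the least $i$ with $A(i)=1$ and simulating $\Phi_i^{A\oplus C}$, and similarly computes $BC$. Finally it computes $(AC)(BC)$ by searching for the first $1$ in $AC$ and simulating. All of this is uniform.

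The main obstacle is partiality. Totality of $\Phi_{e_3}^{D\oplus C}$ must hold exactly when $AC$, $BC$ and $(AC)(BC)$ are all defined, including the case where $AC=0^\omega$. In that case the search for the first $1$ diverges, so the output is not total, which correctly reflects $\uarrow$. I also need to check that when $AC$ or $BC$ is nontotal, the outer computation is nontotal. This can fail: if $(AC)$'s index is found early and its program never queries the undefined part of $BC$, the outer computation might still be total.

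To fix this, I will make machine $e_3$ dovetail a verification. Before outputting bit $n$, it checks that $AC$ and $BC$ are defined on all arguments up to $n$. This way, if either is partial, the output diverges at some $n$, so the result is nontotal, as required.

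Similarly, since outputs are taken mod $2$, the interpretations of machines must be $\{0,1\}$-valued consistently throughout. With these checks, $sABC\simeq AC(BC)$, completing the verification by Theorem~\ref{Feferman}. Nontriviality holds since $k\neq s$ for suitable distinct indices $e_0\neq e_2$.
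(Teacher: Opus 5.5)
Your proposal is correct and follows essentially the same route as the paper. Via Theorem~\ref{Feferman}, you exhibit computable combinators of the form $0^{e}1\concat 0^\omega$ whose partial applications $kA$, $sA$, $sAB$ store the accumulated arguments in the tail after a leading $0^{e'}1$; since these combinators are computable and $\K_{2,01}^X$ is closed under application, the relativized case follows. The differences are cosmetic: you let machines know their own index via the recursion theorem where the paper instead quantifies over all prefixes $0^n1$, and your check that $AC$ and $BC$ are defined up to $n$ before emitting bit $n$ spells out a detail the paper leaves implicit in its choice of $e_0$ with $\Phi_{e_0}^{(0^n1\concat(X\oplus Y))\oplus Z}\simeq XZ(YZ)$.
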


\begin{proof}
We prove that $\K_{2,01}$ is a pca.  The proof that each $\K_{2,01}^X$ is a pca is the same.  
Indeed, we exhibit the combinators $k$ and $s$ in $\K_{2, 01}$ similar to how one exhibits 
these combinators in the $f \cdot g = \Phi_{f(0)}^{f \oplus g}$ encoding of $\K_2$.  These 
combinators are computable (they both have the form $0^{e-1}1 \concat 0^\omega$), so they 
are in every $\K_{2,01}^X$.

Let $e_0$ be an index so that for all $n \in \omega$ and $X, Y \in 2^\omega$, 
$\Phi_{e_0}^{(0^n1 
\concat X) \oplus Y} = X$.  Let $e_1$ be an index so that for all $A, X \in 2^\omega$, 
$\Phi_{e_1}^{A \oplus X} = 0^{e_0-1}1 \concat X$.  Let $k = 0^{e_1-1}1 \concat 0^\omega$.  
Then for any $X, Y \in 2^\omega$
\begin{align*}
k \cdot X = (0^{e_1-1}1 \concat 0^\omega) \cdot X = \Phi_{e_1}^{(0^{e_1-1}1 \concat 0^\omega) 
\oplus X} = 0^{e_0-1}1 \concat X
\end{align*}
and
\begin{align*}
k \cdot X \cdot Y = (0^{e_0-1}1 \concat X) \cdot Y = \Phi_{e_0}^{(0^{e_0-1}1 \concat X) \oplus Y} 
= X.
\end{align*}
Thus $kX\darrow$ and $kXY = X$.

Now let $e_0$ be an index so that for all $n \in \omega$ and $X, Y, Z \in 2^\omega$, 
$\Phi_{e_0}^{(0^n1 
\concat (X \oplus Y)) \oplus Z} \simeq XZ(YZ)$.  Let $e_1$ be an index so 
that for all $n \in \omega$ and $X, Y \in 2^\omega$, $\Phi_{e_1}^{(0^n1 \concat X) \oplus Y} = 
0^{e_0-1}1 \concat (X \oplus Y)$.  Let $e_2$ be an index so that for all $A, X \in 2^\omega$, 
$\Phi_{e_2}^{A 
\oplus X} = 0^{e_1-1}1 \concat X$.  Let $s = 0^{e_2-1}1 \concat 0^\omega$.  Then 
for any $X, Y, Z \in 2^\omega$
\begin{align*}
s \cdot X = (0^{e_2-1}1 \concat 0^\omega) \cdot X = \Phi_{e_2}^{(0^{e_2-1}1 \concat 0^\omega) 
\oplus X} = 0^{e_1-1}1 \concat X
\end{align*}
and
\begin{align*}
s \cdot X \cdot Y = (0^{e_1-1}1 \concat X) \cdot Y = \Phi_{e_1}^{(0^{e_1-1}1 \concat X) \oplus Y} 
= 0^{e_0-1}1 \concat(X \oplus Y)
\end{align*}
and
\begin{align*}
s \cdot X \cdot Y \cdot Z = (0^{e_0-1}1 \concat(X \oplus Y)) \cdot Z = \Phi_{e_0}^{(0^{e_0-1}1 
\concat(X \oplus Y)) \oplus Z} \simeq XZ(YZ)
\end{align*}
Thus $sXY \darrow$ and $sXYZ \simeq XZ(YZ)$.  So $\K_{2,01}$ is a pca.
\end{proof}

\begin{proposition} \label{prop:K2embK201}
$\K_2$ embeds into $\K_{2, 01}$ via an embedding that preserves undefined.  Likewise, for 
every $X$, $\K_2^X$ embeds into $\K_{2, 01}^X$ via an embedding that preserves undefined.
\end{proposition}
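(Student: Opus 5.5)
We will encode each $f\in\omega^\omega$ as a binary sequence $\hat f\in 2^\omega$ in a computable, injective way. The encoded sequence should begin with a block of the form $0^{e-1}1$, for a suitable fixed index $e$, followed by a self-delimiting code of $f$. A natural choice is
\[
\hat f = 0^{e-1}1\concat \langle f\rangle, \qquad \langle f\rangle = 1^{f(0)}0\,1^{f(1)}0\,1^{f(2)}0\cdots,
\]
which is the unary encoding of $f$. The map $f\mapsto\langle f\rangle$ is injective, and $f$ and $\langle f\rangle$ are uniformly Turing equivalent. In particular $\hat f\leq_T X$ iff $f\leq_T X$, so the map restricts to $\K_2^X\to\K_{2,01}^X$ for every $X$. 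The whole argument is therefore uniform and relativizes verbatim. Injectivity of $f\mapsto \hat f$ is immediate.

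The index $e$ is chosen by the recursion theorem. The machine $\Phi_e$, on oracle $\hat f\oplus B$, does the following.
\begin{enumerate}[label={\rm (\arabic*)}]
\item It decodes $f$ from $\hat f$ by stripping the prefix $0^{e-1}1$ and reading the unary blocks. It also decodes $g$ from $B$, assuming $B$ is of the form $\hat g$: it strips the prefix $0^{e-1}1$ (or simply the first $1$) and decodes the unary code.
\item It simulates the $\K_2$ computation $\Phi_{f(0)}^{f\oplus g}$ to obtain the values $h(0),h(1),\ldots$.
\item It outputs the bits of $\hat h = 0^{e-1}1\concat\langle h\rangle$, in order.
\end{enumerate}
Since $\Phi_e$ knows its own index, it can write the prefix without consulting any oracle. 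Each further output bit requires only finitely many values of $h$.

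We then verify the embedding property. Since $e$ is least with $\hat f(e)=1$, we have $\hat f\cdot\hat g=\Phi_e^{\hat f\oplus\hat g}$. If $f\cdot g\darrow = h$ in $\K_2$, then $h$ is total, every bit of $\hat h$ is eventually produced, and so $\hat f\cdot\hat g=\hat h=\widehat{fg}$.

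For preservation of undefined, suppose $f\cdot g\uarrow$. Then some $h(n)$ is undefined. Let $n$ be least such. The machine then outputs $0^{e-1}1$ followed by the codes of $h(0),\ldots,h(n-1)$ and then diverges on the next bit. Hence $\Phi_e^{\hat f\oplus\hat g}$ is not total, so $\hat f\cdot\hat g\uarrow$ in $\K_{2,01}$.

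The main obstacle is the self-reference. The codes of the elements must contain the very index $e$ of the machine that implements application on those codes. This is handled by the recursion theorem: define a computable function that sends a candidate index $i$ to an index of the machine performing (1)--(3) with prefix $0^{i-1}1$, and take a fixed point $e\geq 1$. A second subtle point is that the unary encoding is essential. A partially defined output $h$ must yield a \emph{non-total} binary output, so each bit of $\langle h\rangle$ must depend on only finitely many defined values of $h$. The unary encoding has exactly this property: the bits contributed by $h(n)$ are determined once $h(0),\ldots,h(n)$ are known, and the output diverges precisely when some $h(n)$ is undefined.
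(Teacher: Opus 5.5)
Your proof is correct and is essentially the paper's argument. The paper also uses codes of the form $0^{e-1}1\concat(\cdot)$ with $e$ obtained from the recursion theorem, and has $\Phi_e$ decode, simulate $\Phi_{f(0)}^{f\oplus g}$, and re-encode the result. The only difference is that the paper codes $f$ by the characteristic function of $\graph(f)$ rather than by your unary block code, so the unary code is not essential: what matters, as you correctly identify, is that each bit of the code of $h$ is computed from finitely many values of $h$, and the graph coding has this property too.
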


\begin{proof}
We show that $\K_2$ embeds into $\K_{2, 01}$ preserving undefined.  The relativized version is 
the same.

For a partial function $\varphi \colon \omega \to \omega$, let $\graph(\varphi) = \{\la x, y \ra : x 
\in \dom(\varphi) \land \varphi(x) = y\}$.  View any $X \in 2^\omega$ as the partial function 
$\psi_X 
\colon \omega \to \omega$ via
\begin{align*}
\psi_X(n) =
\begin{cases}
\text{the least $y$ such that $\la n, y \ra \in X$} & \text{if there is such a $y$}\\
\uarrow & \text{otherwise}.
\end{cases}
\end{align*}
Observe that $\psi_{\graph(\varphi)} = \varphi$ for any partial function $\varphi$.

Using the recursion theorem, let $e$ be an index so that for all $X, Y \in 2^\omega$
\begin{align*}
&\Phi_e^{(0^{e-1}1 \concat X) \oplus (0^{e-1}1 \concat Y)} =\\
&\quad\quad
\begin{cases}
0^{e-1}1 \concat \graph\left(\Phi_{\psi_X(0)}^{\psi_X \oplus \psi_Y}\right) & \text{if $\psi_X$, 
$\psi_Y$, 
and $\Phi_{\psi_X(0)}^{\psi_X \oplus \psi_Y}$ are total}\\
\uarrow & \text{otherwise}.
\end{cases}
\end{align*}

Let $F(f) = 0^{e-1}1 \concat \graph(f)$.  This embeds $\K_2$ into $\K_{2, 01}$ and preserves 
undefined.  Suppose that $f \cdot g\darrow = \Phi_{f(0)}^{f \oplus g} = h$.  Then
\begin{align*}
F(f) \cdot F(g) &= (0^{e-1}1 \concat \graph(f)) \cdot (0^{e-1}1 \concat \graph(g))\\
&= \Phi_e^{(0^{e-1}1 \concat \graph(f)) \oplus (0^{e-1}1 \concat \graph(g))}\\
&= 0^{e-1}1 \concat \graph\left(\Phi_{\psi_{\graph(f)}(0)}^{\psi_{\graph(f)} \oplus 
\psi_{\graph(g)}}\right)\\
&= 
0^{e-1}1 \concat \graph\left(\Phi_{f(0)}^{f \oplus g}\right)\\
&= 0^{e-1}1 \concat \graph(h)\\
&= F(h).
\end{align*}
Likewise, if $f \cdot g = \Phi_{f(0)}^{f \oplus g}\uarrow$, then 
$\Phi_{\psi_{\graph(f)}(0)}^{\psi_{\graph(f)} 
\oplus \psi_{\graph(g)}} = \Phi_{f(0)}^{f \oplus g}\uarrow$, so $F(f) \cdot 
F(g)\uarrow$.
\end{proof}

Now 
we embed $\K_{2,01}^X$ into $\K_1^{X'}$ for a given oracle $X$.  For this, recall that a 
\emph{Scott set} is a collection $\Sc \subseteq 2^\omega$ that is closed under Turing 
reducibility, Turing join, and such that every infinite tree $T \subseteq 2^{<\omega}$ in $\Sc$ has 
a path in $\Sc$.  That is, the Scott sets are the second-order parts of $\omega$-models of weak 
K\"{o}nig's lemma.  A Scott set $\Sc$ is a Turing ideal and therefore a sub-pca of $\K_{2,01}$.  If 
$X \in \Sc$, then $\K_{2,01}^X$ is a sub-pca of $\Sc$.

If $Y$ has $\pa$ degree relative to $X$, then $Y$ computes a set $Z$ such that $\Sc = \{Z^{[i]} : 
i \in \omega\}$ is a Scott set containing $X$, where $Z^{[i]} = \{n : \la i, n \ra \in Z\}$ denotes the 
$i$\textsuperscript{th} column of $Z$ (\cite{ScottPA}, see also \cite[Theorem 
4.11]{SimpsonPi01}). 
 It thus follows from the low basis theorem that there is a set $Z$ with $Z' 
\leq_T X'$ such that $\{Z^{[i]} : i \in \omega\}$ is a Scott set containing $X$.

\begin{lemma} \label{lem:K201embK1}
For every oracle $X$, $\K_{2,01}^X$ embeds into $\K_1^{X'}$.
\end{lemma}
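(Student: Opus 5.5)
The plan is to route the embedding through a Scott set. By the remark just before the lemma, fix $Z$ with $Z' \leq_T X'$ such that $\Sc = \{Z^{[i]} : i\in\omega\}$ is a Scott set containing $X$. Since $\Sc$ is closed under $\leq_T$, every $A \leq_T X$ is some column $Z^{[i]}$, so $\K_{2,01}^X$ is a sub-pca of $\Sc$. It therefore suffices to embed $\Sc$, with the $\K_{2,01}$ application, into $\K_1^{X'}$ by a weak embedding, and then restrict. Elements of $\Sc$ will be named by column indices. Equality of columns $Z^{[i]} = Z^{[j]}$ is $\Pi^0_1(Z)$, hence decidable from $Z' \leq_T X'$. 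So $X'$ can compute the \emph{canonical index} $c(i)$, the least $i'$ with $Z^{[i']} = Z^{[i]}$.

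The key step, and the one I expect to be the main obstacle, is computing an index for a product $Z^{[i]}\cdot Z^{[j]}$ using only $X'$. In general this would require deciding totality, which is too strong for $X'$. The trick is that we are allowed to \emph{not} preserve undefined. Given $i,j$, search for the least $e$ with $Z^{[i]}(e)=1$; this search diverges if $Z^{[i]} = 0^\omega$, which is harmless. Let $\varphi = \Phi_e^{Z^{[i]}\oplus Z^{[j]}}$, a partial $\{0,1\}$-valued function. Call $k$ \emph{consistent} if $Z^{[k]}(n) = \varphi(n)$ whenever $\varphi(n)\darrow$. Consistency is $\Pi^0_1(Z)$, hence $X'$-decidable. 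A consistent $k$ exists: the tree of strings consistent with $\varphi$ is infinite and computable from $Z^{[i]}\oplus Z^{[j]} \in \Sc$, so it has a path in $\Sc$. Thus $X'$ can find the least consistent $k$ and then pass to $c(k)$. When $\varphi$ is total, the only consistent column is $\varphi$ itself, so this yields the canonical index of the true product.

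Next I would assemble the embedding exactly as in Lemma~\ref{lem:K2embK1presundef}, now using oracle $X'$. By the recursion theorem, pick $e$ with $\Phi_e^{X'}(i) = p(i)$, where $\Phi_{p(i)}^{X'}$ behaves as follows. On input $0$ it returns $i$. On input $m>0$ it computes $i' = \Phi_m^{X'}(0)$, runs the search above on the pair $(i,i')$ to obtain a canonical $k$, and returns $\Phi_e^{X'}(k)$. Define $F(A) = e\cdot a_A$, where $a_A$ is the least $i$ with $Z^{[i]} = A$. Injectivity follows from $F(A)\cdot 0 = a_A$.

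It remains to check the homomorphism property. Suppose $A\cdot B\darrow = C$ in $\K_{2,01}$. Then $F(A)\cdot F(B) = p(a_A)\cdot (e\, a_B)$. On this input the machine recovers $a_B$, finds the $e$-index for $A$, and the consistency search returns $a_C$, because the product is total. So the value is $e\, a_C = F(C)$. When $A\cdot B\uarrow$ the result may be defined, but weak embeddings permit this. Restricting $F$ to $\K_{2,01}^X$ gives the lemma.
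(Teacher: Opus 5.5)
Your proposal is correct and follows the paper's proof essentially step for step. It uses the same Scott set $\Sc$ coded by $Z$ with $Z'\leq_T X'$, and your ``least consistent $k$'' is exactly the paper's $X'$-computable $g(a,b)$ (so the extra pass to $c(k)$ is harmless but unnecessary), assembled by the same recursion-theorem construction as in Lemma~\ref{lem:K2embK1presundef}. The one detail to add is that the indices $p(i)$ must be chosen positive, as the paper does, since otherwise $F(A)\cdot F(B)=\Phi_{p(a_A)}^{X'}(p(a_B))$ could fall into the input-$0$ clause.
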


\begin{proof}
As discussed above, let $Z$ be such that $Z' \leq_T X'$ and $\Sc = \{Z^{[i]} : i \in \omega\}$ is a 
Scott set containing $X$.  We embed $\Sc$ into $\K_1^{X'}$, thereby embedding $\K_{2,01}^X$ 
into $\K_1^{X'}$.

Scott sets are closed with respect to containing total $\{0,1\}$-valued extensions of partial 
$\{0,1\}$-valued 
functions.  This means that for every $e$ and $i$, there is a $j$ such that 
$Z^{[j]}$ 
is a total extension of $\Phi_e^{Z^{[i]}}$:
\begin{align*}
\forall n(\Phi_e^{Z^{[i]}}(n)\darrow \;\Longrightarrow\; \Phi_e^{Z^{[i]}}(n) = Z^{[j]}(n))
\end{align*}
Of course, if $\Phi_e^{Z^{[i]}}$ is total, then $Z^{[j]} = \Phi_e^{Z^{[i]}}$.  Notice that ``$Z^{[j]}$ is a 
total extension of $\Phi_e^{Z^{[i]}}$'' is a $\Pi_1$ property of $e$, $i$, and $j$ relative to $Z$.  
Thus, given $e$ and $i$, $X' \geq_T Z'$ can find the least $j$ such that $Z^{[j]}$ is a total 
extension of $\Phi_e^{Z^{[i]}}$.  Furthermore, for any $a$ and $b$, $Z^{[a]} \oplus Z^{[b]}$ is in 
$\Sc$ 
and therefore some $Z^{[c]}$ is a total extension of $Z^{[a]} \cdot Z^{[b]}$.  Thus the 
function
\begin{align*}
g(a,b) = \text{the least $c$ such that $Z^{[c]}$ is a total extension of $Z^{[a]} \cdot Z^{[b]}$}
\end{align*}
is computable from $X'$.  Note that if $(Z^{[a]} \cdot Z^{[b]})\darrow$, then $Z^{[g(a,b)]} = Z^{[a]} 
\cdot Z^{[b]}$.

From here on, the proof is similar to that of Lemma~\ref{lem:K2embK1presundef}.  Using the 
recursion theorem, let $e$ be an index such that $\Phi_e^{X'}(a)$ produces another index $p = 
p(a) > 0$ for a machine $\Phi_{p(a)}^{X'}$ behaving as follows.
\begin{itemize}
\item $\Phi_{p(a)}^{X'}(0) = a$.
\item For $n > 0$, $\Phi_{p(a)}^{X'}(n)$:
\begin{itemize}
\item First runs $\Phi_n^{X'}(0)$.  If it halts, lets $b = \Phi_n^{X'}(0)$.
\item Returns $\Phi_e^{X'}(g(a,b))$.
\end{itemize}
\end{itemize}

Note that for any $a$, $ea0 = p(a) \cdot 0 = \Phi_{p(a)}^{X'}(0) = a$.  Thus if $a \neq b$, then 
$ea \neq eb$ because $e a 0 = a \neq b = e b 0$.

Say that $a$ is the \emph{least representative} of $Z^{[a]}$ if there is no $a' < a$ with $Z^{[a']} = 
Z^{[a]}$.  We show that the mapping $Z^{[a]} \mapsto ea$ on the least representatives of the 
elements of $\Sc$ embeds $\Sc$ into $\K_1^{X'}$.  This mapping is injective because if $Z^{[a]} 
\neq Z^{[b]}$, then $a \neq b$, so $ea \neq eb$.

Suppose that $a$, $b$, and $c$ are the least representatives of $Z^{[a]}$, $Z^{[b]}$, and 
$Z^{[c]}$ and that $(Z^{[a]} \cdot Z^{[b]})\darrow = Z^{[c]}$.  Then $(ea)\cdot(eb) = ec$ in 
$\K_1^{X'}$:
\begin{align*}
(ea) 
\cdot (eb) = \Phi_{ea}^{X'}(eb) = \Phi_{p(a)}^{X'}(eb) = \Phi_e^{X'}(g(a,b)) = \Phi_e^{X'}(c) = 
ec.
\end{align*}
The third equality is because $eb0 = b$ as discussed above, so $\Phi_{p(a)}^{X'}(eb) = 
\Phi_e^{X'}(g(a,b))$.  The fourth equality is because $g(a,b)=c$ since $Z^{[c]}$ is the least 
representative of $Z^{[a]} \cdot Z^{[b]}$.
\end{proof}

\begin{theorem}
For oracles $X$ and $Y$, $\K_2^X$ embeds into $\K_1^Y$ if and only if $X' \leq_T Y$.
\end{theorem}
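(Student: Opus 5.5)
The plan is to prove the two directions separately, assembling results already stated in the paper; the only genuinely new ingredient was Lemma~\ref{lem:K201embK1}, and it is already in place.

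For the forward direction, suppose $\K_2^X \emb \K_1^Y$. Then $X' \leq_T Y$ follows directly from \cite[Theorem~6.9]{GolovTerwijn}. That result was quoted in the introduction to this section and used in the proof of Lemma~\ref{lem:presundefjj}, and it does not require the embedding to preserve undefined.

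For the reverse direction, suppose $X' \leq_T Y$. I will compose three weak embeddings:
\[
\K_2^X \emb \K_{2,01}^X \emb \K_1^{X'} \emb \K_1^Y .
\]
The first is Proposition~\ref{prop:K2embK201}, which in fact preserves undefined. The second is Lemma~\ref{lem:K201embK1}, which need not preserve undefined. The third holds by \cite[Theorem~8.4]{ShaferTerwijn}, since $X' \leq_T Y$; as noted in the footnote, its proof works for weak embeddings.

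It remains to observe that a composition of weak embeddings is a weak embedding. Write $F\colon \A\emb\B$ and $G\colon\B\emb\mathcal{C}$. Then $G\circ F$ is injective, being a composition of injections. Moreover, if $\A\models ab\darrow$, then $\B\models F(a)F(b)\darrow = F(ab)$, and so $\mathcal{C}\models G(F(a))G(F(b))\darrow = G(F(ab))$. Note that no condition on undefined applications is needed at any stage, which is exactly why the middle step is allowed to fail to preserve undefined.

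There is no real obstacle. The one point to double-check is that Lemma~\ref{lem:K201embK1} really gives an embedding of $\K_{2,01}^X$ itself and not merely of the Scott set $\Sc$. This holds because $\K_{2,01}^X\subseteq\Sc$ is a sub-pca with the same application, so restricting the map $Z^{[a]}\mapsto ea$ (on least representatives) to $\K_{2,01}^X$ is still injective and respects defined applications.
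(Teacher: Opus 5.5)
Your proposal is correct and follows the paper's proof exactly. The forward direction is \cite[Theorem~6.9]{GolovTerwijn}, and the reverse direction is the composite $\K_2^X \emb \K_{2,01}^X \emb \K_1^{X'} \emb \K_1^Y$ via Proposition~\ref{prop:K2embK201}, Lemma~\ref{lem:K201embK1}, and $X' \leq_T Y$. Your added checks, that weak embeddings compose and that the map of Lemma~\ref{lem:K201embK1} restricts from $\Sc$ to the sub-pca $\K_{2,01}^X$, are details the paper leaves implicit, and both are sound.
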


\begin{proof}
Suppose that $X' \leq_T Y$.  Then
\begin{align*}
\K_2^X \emb \K_{2,01}^X \emb \K_1^{X'} \emb \K_1^Y.
\end{align*}
The first embedding is by Proposition~\ref{prop:K2embK201}.  The second embedding is by 
Lemma~\ref{lem:K201embK1}.  The third embedding is because $X' \leq_T Y$.

Conversely, if $\K_2^X \emb \K_1^Y$, then $X' \leq_T Y$ by~\cite[Theorem~6.9]{GolovTerwijn}.
\end{proof}

\section*{Acknowledgments}

We thank Michael Rathjen, Shuwei Wang, Andrew Brooke-Taylor, 
and Asaf Karagila for  discussions about pcas and 
helpful comments about related set theoretic problems.

\bibliographystyle{amsplain}
\bibliography{nonembeddings}

@article{Engeler,
	author = {Engeler, Erwin},
	doi = {10.1007/BF02483849},
	fjournal = {Algebra Universalis},
	issn = {0002-5240,1420-8911},
	journal = {Algebra Universalis},
	mrclass = {03B40 (08A99)},
	mrnumber = {631731},
	mrreviewer = {Henk\ Barendregt},
	number = {3},
	pages = {389--392},
	title = {Algebras and combinators},
	url = {https://doi.org/10.1007/BF02483849},
	volume = {13},
	year = {1981}}

@book{TroelstravanDalenII,
	author = {Troelstra, Anne S. and van Dalen, Dirk},
	isbn = {0-444-70358-6},
	mrclass = {03-01 (03B15 03Fxx 03G30)},
	mrnumber = {970277},
	mrreviewer = {G.\ E.\ Mints},
	pages = {i--xviii and 345--880 and I--LII},
	publisher = {North-Holland Publishing Co., Amsterdam},
	series = {Studies in Logic and the Foundations of Mathematics},
	title = {{C}onstructivism in {M}athematics. {V}ol. {II}},
	volume = {123},
	year = {1988}}

@article{Terwijn2025,
	author = {Terwijn, Sebastiaan A.},
	doi = {10.46298/lmcs-21(2:17)2025},
	fjournal = {Log. Methods Comput. Sci.},
	issn = {1860-5974},
	journal = {Logical Methods in Computer Science},
	mrclass = {03B40},
	mrnumber = {4917049},
	mrreviewer = {Mariangiola\ Dezani-Ciancaglini},
	number = {2},
	pages = {Paper No. 17, 10},
	title = {Completions of {K}leene's second model},
	url = {https://doi.org/10.46298/lmcs-21(2:17)2025},
	volume = {21},
	year = {2025}}

@phdthesis{Swan,
	author = {Swan, Andrew W.},
	school = {University of Leeds},
	title = {{A}utomorphisms of {P}artial {C}ombinatory {A}lgebras and {R}ealizability {M}odels of {C}onstructive {S}et {T}heory},
	year = {2012}}

@incollection{SimpsonPi01,
	author = {Simpson, Stephen G.},
	booktitle = {{R}everse {M}athematics 2001},
	isbn = {1-56881-263-9},
	mrclass = {03F35 (03B30 03D30 03D80)},
	mrnumber = {2185446},
	pages = {352--378},
	publisher = {Assoc. Symbol. Logic, La Jolla, CA},
	series = {Lect. Notes Log.},
	title = {{$\Pi^0_1$} sets and models of {$\rm WKL_0$}},
	volume = {21},
	year = {2005}}

@article{ShaferTerwijn,
	author = {Shafer, Paul and Terwijn, Sebastiaan A.},
	doi = {10.1017/jsl.2021.50},
	fjournal = {J. Symb. Log.},
	issn = {0022-4812,1943-5886},
	journal = {The Journal of Symbolic Logic},
	mrclass = {03D28 (03B40 03D45)},
	mrnumber = {4347572},
	mrreviewer = {Robert\ S.\ Lubarsky},
	number = {3},
	pages = {1154--1188},
	title = {Ordinal analysis of partial combinatory algebras},
	url = {https://doi.org/10.1017/jsl.2021.50},
	volume = {86},
	year = {2021}}

@inproceedings{Scott,
	author = {Scott, Dana},
	booktitle = {Proceedings of the {T}hird {S}candinavian {L}ogic {S}ymposium ({U}niv. {U}ppsala, {U}ppsala, 1973)},
	mrclass = {02C20 (02F25 02F30)},
	mrnumber = {384495},
	mrreviewer = {A.\ S.\ Kuzichev},
	pages = {154--193},
	publisher = {North-Holland, Amsterdam-Oxford},
	series = {Stud. Logic Found. Math.},
	title = {Lamba calculus and recursion theory},
	volume = {Vol. 82},
	year = {1975}}

@incollection{ScottPA,
	author = {Scott, Dana},
	booktitle = {Proc. {S}ympos. {P}ure {M}ath., {V}ol. {V}},
	mrclass = {02.72},
	mrnumber = {141595},
	mrreviewer = {H.\ Ribeiro},
	pages = {117--121},
	publisher = {Amer. Math. Soc., Providence, RI},
	title = {Algebras of sets binumerable in complete extensions of arithmetic},
	year = {1962}}

@incollection{Rathjen,
	author = {Rathjen, Michael},
	booktitle = {Logic {C}olloquium '03},
	isbn = {978-1-56881-293-9; 1-56881-293-0},
	mrclass = {03F50 (03F35)},
	mrnumber = {2207359},
	mrreviewer = {Mariko\ Yasugi},
	pages = {282--314},
	publisher = {Assoc. Symbol. Logic, La Jolla, CA},
	series = {Lect. Notes Log.},
	title = {Realizability for constructive {Z}ermelo-{F}raenkel set theory},
	volume = {24},
	year = {2006}}

@book{vanOosten,
	author = {van Oosten, Jaap},
	isbn = {978-0-444-51584-1},
	mrclass = {03F60 (03-02 03G30 26E40)},
	mrnumber = {2479466},
	mrreviewer = {Colin\ McLarty},
	pages = {xvi+310},
	publisher = {Elsevier B. V., Amsterdam},
	series = {Studies in Logic and the Foundations of Mathematics},
	title = {{R}ealizability: {A}n {I}ntroduction to its {C}ategorical {S}ide},
	volume = {152},
	year = {2008}}

@incollection{vanOosten1999,
	author = {van Oosten, Jaap},
	booktitle = {{M}odels and {C}omputability ({L}eeds, 1997)},
	doi = {10.1017/CBO9780511565670.019},
	isbn = {0-521-63550-0},
	mrclass = {03B40 (03B15)},
	mrnumber = {1721178},
	mrreviewer = {Ugo\ de'Liguoro},
	pages = {389--405},
	publisher = {Cambridge Univ. Press, Cambridge},
	series = {London Math. Soc. Lecture Note Ser.},
	title = {A combinatory algebra for sequential functionals of finite type},
	url = {https://doi.org/10.1017/CBO9780511565670.019},
	volume = {259},
	year = {1999}}

@book{Odifreddi,
	author = {Odifreddi, Piergiorgio},
	isbn = {0-444-87295-7},
	mrclass = {03Dxx (03-02 03E15 03E45 03F30 68Q05)},
	mrnumber = {982269},
	mrreviewer = {Rodney\ G.\ Downey},
	pages = {xviii+668},
	publisher = {North-Holland Publishing Co., Amsterdam},
	series = {Studies in Logic and the Foundations of Mathematics},
	title = {{C}lassical {R}ecursion {T}heory},
	volume = {125},
	year = {1989}}

@book{LongleyNormann,
	author = {Longley, John and Normann, Dag},
	doi = {10.1007/978-3-662-47992-6},
	isbn = {978-3-662-47991-9; 978-3-662-47992-6},
	mrclass = {03-02 (03B40 03D65 68Q55)},
	mrnumber = {3525626},
	mrreviewer = {Robert\ S.\ Lubarsky},
	pages = {xvi+571},
	publisher = {Springer, Heidelberg},
	series = {Theory and Applications of Computability},
	title = {{H}igher-{O}rder {C}omputability},
	url = {https://doi.org/10.1007/978-3-662-47992-6},
	year = {2015}}

@phdthesis{Bethke,
	author = {Bethke, Ingemarie},
	school = {Universiteit van Amsterdam},
	title = {{N}otes on {P}artial {C}ombinatory {A}lgebras},
	year = {1988}}

@article{Klop,
	author = {Klop, Jan Willem},
	journal = {Bulletin of the European Association for Theoretical Computer Science},
	pages = {472--482},
	title = {Extending partial combinatory algebras},
	volume = {16},
	year = {1982}}

@book{KleeneVesley,
	author = {Kleene, Stephen Cole and Vesley, Richard Eugene},
	mrclass = {02.23},
	mrnumber = {176922},
	mrreviewer = {G.\ Kreisel},
	pages = {viii+206},
	publisher = {North-Holland Publishing Co., Amsterdam},
	title = {{T}he {F}oundations of {I}ntuitionistic {M}athematics, {E}specially in {R}elation to {R}ecursive {F}unctions},
	year = {1965}}

@article{GolovTerwijn,
	author = {Golov, Anton and Terwijn, Sebastiaan A.},
	doi = {10.1215/00294527-2023-0002},
	fjournal = {Notre Dame J. Form. Log.},
	issn = {0029-4527,1939-0726},
	journal = {Notre Dame Journal of Formal Logic},
	mrclass = {03B40 (03D28 03D80)},
	mrnumber = {4564838},
	number = {1},
	pages = {129--158},
	title = {Embeddings between partial combinatory algebras},
	url = {https://doi.org/10.1215/00294527-2023-0002},
	volume = {64},
	year = {2023}}

@article{FrittaionRathjen,
	author = {Frittaion, Emanuele and Rathjen, Michael},
	doi = {10.1093/logcom/exaa087},
	fjournal = {J. Logic Comput.},
	issn = {0955-792X,1465-363X},
	journal = {Journal of Logic and Computation},
	mrclass = {03E70 (03F10)},
	mrnumber = {4226219},
	mrreviewer = {Robert\ S.\ Lubarsky},
	number = {2},
	pages = {630--653},
	title = {Extensional realizability for intuitionistic set theory},
	url = {https://doi.org/10.1093/logcom/exaa087},
	volume = {31},
	year = {2021}}

@incollection{Feferman,
	author = {Feferman, Solomon},
	booktitle = {{A}lgebra and {L}ogic ({F}ourteenth {S}ummer {R}es. {I}nst., {A}ustral. {M}ath. {S}oc., {M}onash {U}niv., {C}layton, 1974)},
	mrclass = {02E05 (02D99)},
	mrnumber = {409137},
	mrreviewer = {G.\ E.\ Mints},
	pages = {87--139},
	publisher = {Springer, Berlin-New York},
	series = {Lecture Notes in Math.},
	title = {A language and axioms for explicit mathematics},
	volume = {Vol. 450},
	year = {1975}}

@phdthesis{Longley,
	author = {Longley, John R.},
	school = {University of Edinburgh},
	title = {{R}ealizability {T}oposes and {L}anguage {S}emantics},
	year = {1994}}

@book{Beeson,
	author = {Beeson, Michael J.},
	doi = {10.1007/978-3-642-68952-9},
	isbn = {3-540-12173-0},
	mrclass = {03F65 (00A25 03F50 03F55 03F60)},
	mrnumber = {786465},
	mrreviewer = {V.\ Ya.\ Kreinovich},
	note = {Metamathematical studies},
	pages = {xxiii+466},
	publisher = {Springer-Verlag, Berlin},
	series = {Ergebnisse der Mathematik und ihrer Grenzgebiete (3) [Results in Mathematics and Related Areas (3)]},
	title = {{F}oundations of {C}onstructive {M}athematics},
	url = {https://doi.org/10.1007/978-3-642-68952-9},
	volume = {6},
	year = {1985}}

@book{Barendregt,
	author = {Barendregt, Henk P.},
	edition = {revised},
	isbn = {978-1-84890-066-0},
	mrclass = {03B40 (01A75 03-02)},
	mrnumber = {3235567},
	note = {With addenda for the 6th imprinting, Mathematical Logic and Foundations},
	pages = {xvi+622+E16},
	publisher = {College Publications, London},
	series = {Studies in Logic (London)},
	title = {{T}he {L}ambda {C}alculus: {I}ts {S}yntax and {S}emantics},
	volume = {40},
	year = {2012}}

@article{AbrahamShore,
	author = {Abraham, Uri and Shore, Richard A.},
	doi = {10.1007/BF02772668},
	fjournal = {Israel J. Math.},
	issn = {0021-2172},
	journal = {Israel Journal of Mathematics},
	mrclass = {03D30 (06A12)},
	mrnumber = {861896},
	mrreviewer = {C.\ G.\ Jockusch, Jr.},
	number = {1},
	pages = {1--51},
	title = {Initial segments of the degrees of size {$\aleph_1$}},
	url = {https://doi.org/10.1007/BF02772668},
	volume = {53},
	year = {1986}}

\end{document}